\documentclass[11pt,oneside,letter]{article}
\pagestyle{myheadings} 

\usepackage{amsthm, amsmath, amssymb, amsfonts, graphicx, epsfig, bbm}

\usepackage{mathrsfs}

\usepackage{enumerate}


\usepackage[colorlinks=true, pdfstartview=FitV, linkcolor=blue,
            citecolor=blue, urlcolor=blue]{hyperref}
\usepackage[usenames]{color}

\usepackage[numbers]{natbib}

\usepackage{url}
\makeatletter\def\url@leostyle{%
 \@ifundefined{selectfont}{\def\UrlFont{\sf}}{\def\UrlFont{\scriptsize\ttfamily}}} \makeatother\urlstyle{leo}

\usepackage{accents, comment}

\usepackage[normalem]{ulem}


\setlength{\voffset}{-0.5in}
\setlength{\hoffset}{-0.5in}
\setlength{\textheight}{8.5in}
\setlength{\textwidth}{6in}

\newtheorem{theorem}{Theorem}
\newtheorem{proposition}[theorem]{Proposition}
\newtheorem{lemma}[theorem]{Lemma}
\newtheorem{corollary}[theorem]{Corollary}
\newtheorem{definition}[theorem]{Definition}

\theoremstyle{definition}

\newtheorem{example}[theorem]{Example}
\theoremstyle{remark}
\newtheorem{remark}[theorem]{Remark}

\numberwithin{equation}{section}
\numberwithin{theorem}{section}



\def\cA{\mathcal{A}}

\def\cF{\mathcal{F}}
\def\cG{\mathcal{G}}
\def\cH{\mathcal{H}}

\def\cN{\mathcal{N}}


\def\bK{\mathbb{K}}

\def\bR{\mathbb{R}}


\def\bse{\begin{equation}\begin{split}}
\def\ese{\end{split}\end{equation}}

\newcommand{\1}{\mathbbm{1}}                     
\newcommand{\set}[1]{{\{#1\}}}            
\renewcommand{\mid}{\;|\;}              
\newcommand{\abs}[1]{\left\vert#1\right\vert}   
\usepackage{cancel}
\usepackage{ulem}
\usepackage[T1]{fontenc}
\usepackage[utf8]{inputenc}

\usepackage{upgreek}
\title{  Conditional Markov Chains Revisited \\
Part I:  Construction and properties. }

\author{Tomasz R.  Bielecki$^1$ \\[-0.3ex]
\url{bielecki@iit.edu} \\[-0.3ex]
\and
Jacek Jakubowski$^{2,3}$ \\[-0.3ex]
\url{jakub@mimuw.edu.pl} \\[-0.3ex]
\and
Mariusz Niew\k{e}g\l owski$^3$
\\[-0.3ex]
\url{m.nieweglowski@mini.pw.edu.pl} \\[-0.3ex]
\and \small{$^1$Department of Applied Mathematics,}\\[-0.3ex]
\small{Illinois Institute of Technology,}\\[-0.3ex]
\small{Chicago, IL 60616, USA }\\[-0.3ex]
\and
\small{$^2$Institute of Mathematics,}\\[-0.3ex]
\small{University of Warsaw,}\\[-0.3ex]
\small{Banacha 2,
02-097 Warszawa, Poland}\\[-0.3ex]
\and
\\[-0.3ex]
\small{$^3$Faculty of Mathematics
and Information Science,}\\[-0.3ex]
\small{Warsaw University of Technology,}\\[-0.3ex]
\small{ul. Koszykowa 75,
00-662 Warszawa, Poland}\\[-0.3ex]
}
\date{\today}

\binoppenalty=\maxdimen 
\relpenalty=\maxdimen

%
%
%
%
\usepackage{dsfont}
%
%
%


\newcommand{\be}{\begin{equation}}
\newcommand{\ee}{\end{equation}}
\newcommand{\bde}{\begin{displaymath}}
\newcommand{\ede}{\end{displaymath}}
\newcommand{\beq}{\begin{eqnarray*}}
\newcommand{\eeq}{\end{eqnarray*}}
\newcommand{\beqa}{\begin{eqnarray}}
\newcommand{\eeqa}{\end{eqnarray}}
\newcommand{\bel }{\left\{\begin{array}{ll}}
\newcommand{\eel}{\cr \end{array} \right.}
\newcommand{\bd}{\begin{definition} \rm }
\newcommand{\ed}{\end{definition} \rm }
 \newcommand{\bex}{\begin{example} \rm }
\newcommand{\eex}{\end{example}}
\newcommand{\bt}{\begin{theorem}}
\newcommand{\et}{\end{theorem}}
\newcommand{\bl}{\begin{lemma}}
\newcommand{\el}{\end{lemma}}
\newcommand{\bp}{\begin{proposition}}
\newcommand{\ep}{\end{proposition}}
\newcommand{\bcor}{\begin{corollary}}
\newcommand{\ecor}{\end{corollary}}
\newcommand{\lab }{\label }
\newcommand{\brem}{\begin{remark}}
\newcommand{\erem}{\end{remark}}

\def\finproof{\hfill $\Box$ \vskip 5 pt}

\def\cadlag{c\` adl\` ag\ }
\def\I{\mathds{1}}

\def \wh{\widehat}
\def \wt{\widetilde}
\def\r{\mathbb R}
\def\F{{\mathcal F}}
\def\H{{\mathcal H}}
\def\G{{\mathcal G}}

\def\FF{{\mathbb F}}
\def\HH{{\mathbb H}}
\def\GG{{\mathbb G}}

\def\P{\mathbb P}
 \def\Q{\mathbb Q}

\def\E {{\mathbb E} }
\def\tT{{t\in[0,T]}}

\def\wrt{\mbox{with respect to }}

\DeclareMathAlphabet\mathbfcal{OMS}{cmsy}{b}{n}

\newcommand{\mn}[1]{\begin{color}[rgb]{0.20, 0.66, 0.26}#1\end{color}}



\def\j{\textcolor[rgb]{0.85, 0.35, 0.00}}

 \mathchardef\mhyphen="2D
    \usepackage{fancyhdr}
    \usepackage{lastpage}
    \pagestyle{fancy}
    \fancyhf{} 
    
    \lhead{\texttt{\today}}
    \chead{ CMC:  Construction and properties. }
    \rhead{\thepage\ of \pageref{LastPage} }

\date{ \today}
\begin{document}
\maketitle
\begin{abstract}
In this paper we continue the study of conditional Markov chains (CMCs) with finite state spaces, that we initiated in Bielecki, Jakubowski and Niew\k{e}g\l owski (2015) in an effort to enrich the theory of CMCs that was originated in Bielecki and Rutkowski (2004). We provide an alternative definition of a CMC and an alternative construction of a CMC via a change of probability measure. It turns out that our construction produces CMCs that are also doubly stochastic Markov chains (DSMCs), which allows to study of several properties of CMCs using tools available for DSMCs.\\
{\noindent \small
{\it \bf Keywords:} conditional Markov
chain; doubly stochastic Markov
chain; compensator of a random measure; change of probability measure.
 \\{\it \bf MSC2010:} 60J27; 60G55. }
\end{abstract}
\tableofcontents

\section{Introduction}

In this paper we continue the study of conditional Markov chains (CMCs) with finite state spaces, that we initiated in Bielecki, Jakubowski and Niew\k{e}g\l owski \cite{BieJakNie2015} in an effort to enrich the theory of CMCs that was originated in Bielecki and Rutkowski \cite{BieRut2004}.

CMCs were conceptualized in the context of credit risk, where they have been found to provide a useful tool for modeling credit migrations. In many ways, a CMC is an important generalization of the concept of a default time with stochastic compensator, {a key concept in the models of financial markets allowing for default of  parties of a financial contract}. Such a model of default time is really just a special example of a CMC: it is a CMC taking values in a state space consisting of only two states, say $0$ and $1$, where $0$ is the transient state and $1$ is the absorbing state.

In \cite{BieJakNie2015} we proposed a modified definition of the conditional Markov property, which was less general than Definition 11.3.1 used in Chapter 11.3 in \cite{BieRut2004}. The reason for this was that the definition of  conditional Markov property proposed in \cite{BieJakNie2015} was aimed at providing a suitable   framework  for study of Markov consistency properties for conditional Markov chains and study of Markov copulae for conditional Markov chains, a feature that can not be achieved within the framework of the CMC framework proposed in  \cite{BieRut2004}. Still, the definition of the conditional Markov property, and the related construction of a CMC as presented in \cite{BieJakNie2015} were not general enough, as they did not allow for study of conditional Markov families. This is because in \cite{BieJakNie2015} we only dealt with processes starting from a fixed, non-random, initial state.
Here, we generalize the definition of a conditional Markov property and construction of a CMC that allow for the initial state of the chain to have a nondegenerate conditional initial distribution, and, consequently, allow for study of conditional Markov families. Such study will be conducted elsewhere.

Classical conditional Markov chains, that is, the ones defined originally in \cite{BieRut2004}, have already proven to play important role in applications in finance and in insurance, for example (cf.
Bielecki and Rutkowski \cite{BieRut2000},
\cite{BieRut2003},
\cite{BieRut2004},
Jakubowski and Niew\k{e}g{\l}owski
\cite{JakNie2011},
Eberlein and {\"O}zkan
\cite{EbeOzk2003},
Eberlein and Grbac
\cite{EbeGrb2013},
Biagini, Groll and Widenmann
\cite{BiaWid2013}).  The main advantage of these processes is that, via appropriate conditioning, their primary Markov properties are mixed with dependence of their infinitesimal characteristics on relevant random factors, that do not have to be Markovian. {The CMCs studied in this paper may lead to many more applications since, as already has been mentioned above, the present modified definition allows to study dependence properties between CMCs, which are crucial in applications to credit and counterparty risk, among other applications.}
In fact, the present paper is a companion paper to Bielecki, Jakubowski and Niew\k{e}g\l owski \cite{BieJakNie2014b}, where we complement the study done here by investigating the issues of modeling dependence between CMCs, and we propose some specific applications.

An important family of jump processes, so called doubly stochastic Markov chains (DSMC), was introduced in Jakubowski and Niew\k{e}g\l owski \cite{JakNie2010}. The conditional Markov chains constructed in the present paper turn out to be doubly stochastic Markov chains. Thus, the benefit from the construction provided here is two-fold:
\begin{itemize}
\item The constructed CMCs enjoy the conditional Markov property, which has unquestionable practical appeal, and
\item The constructed CMCs enjoy the doubly stochastic Markov property, which has critical theoretical implications allowing for applying important tools from stochastic analysis to studying CMCs.
\end{itemize}

The paper is organized as follows: In Section \ref{sec:CMC-and-int} we introduce the concept of CMC, which underlies the present study. In this section we also introduce and discuss the relevant concept of stochastic generator (or an intensity matrix) of a CMC. In addition, we give there two examples of $(\FF,\GG)$-CMC, one  which does not have the intensity, and one with the intensity. Section \ref{constr} is devoted to presentation of a specific method for constructing a CMC. In Section \ref{cmcdsmc} we relate conditional Markov chains to doubly stochastic Markov chains. In particular, we show that any conditional Markov chain constructed using the change of measure technique used in Section \ref{constr} is also a doubly stochastic Markov chain. Finally, in the last section we collect all needed technical results used throughout the paper.

\section{Conditional Markov Chain and Its Intensity}
\label{sec:CMC-and-int}

Let $T>0$ be a fixed finite time horizon. Let $(\Omega, \mathcal{A}, \mathbb{P})$ be an underlying complete probability space, which is endowed with two
filtrations, $\FF=(\F_t)_{t\in[ 0,T]}$ and $\GG=(\G_t)_{t\in[ 0,T]}$, that are assumed to satisfy the usual conditions.
{The standing assumption though will be that all filtrations used in the paper are completed, with respect to relevant probability measures.}
For the future reference we also define
\be\label{eq:GGhat-0}
\wh{\G_t} := \F_T \vee \G_t, \quad \tT,
\ee
as well as the corresponding filtration $\wh{\GG} := (\wh{\G}_t)_{ t \in [0,T]}.$ 
{
In what what follows, we will not require that it is right-continuous.
}
%

Typically, processes considered in this paper are defined on  $(\Omega, \mathcal{A}, \mathbb{P})$, and are restricted to the time interval $[0,T]$. { Moreover, for any process $U$
we denote  by $\FF^U$ the completed right-continuous filtration generated by this process.} In addition, we fix a finite set $S$, and we denote by $d$ the cardinality of $S$. Without loss of generality we take $S=\set{1,2,3,\ldots,d}.$

\begin{definition}\label{def:CMC}
An $S$-valued,  $\mathbb{G}$-adapted c\` adl\` ag  process $X$ is called  an $(\mathbb{F},\mathbb{G})$--conditional Markov chain if for every $x_1, \ldots, x_k \in S$ and  for every $0\leq t\leq t_1 \leq \ldots \leq t_k\leq T$ it satisfies the following property$\, $
\begin{equation}\label{eq:CMC-def-1}
\P( X_{t_1} =x_1, \ldots, X_{t_k} = x_k | \F_t \vee \G_t)
=
\P( X_{t_1} =x_1, \ldots, X_{t_k} = x_k | \F_t \vee \sigma(X_t)).
\end{equation}
\end{definition}

\begin{remark} (i) We will call filtration $\GG$ the \textsl{base} filtration, and we will call filtration $\FF$ the \textsl{reference} filtration.
Usually $\GG = \FF^X$.
\\
(ii)  {It needs to be stressed that an  $(\mathbb{F},\mathbb{G})$--conditional Markov chain may not be a classical Markov chain (in any filtration).} However, if  $\mathbb{G}$ is independent of $\mathbb{F}$, then the above definition reduces to the case of a classical Markov chain with respect to filtration $\GG$, or $\GG\,$--Markov chain.
In other words, a classical $\GG$-Markov chain is an  $(\mathbb{F},\mathbb{G})$--conditional Markov chain for the reference filtration independent of  the base filtration.
\end{remark}

In what follows we shall write  $(\FF, \GG)$-CMC, for short, in place of  $(\FF, \GG)$-conditional Markov chain.

\subsection{Intensity of an $(\FF,\GG)$-CMC}

Let $X$ be an $(\FF,\GG)$-CMC. For each $x \in S$ we define the corresponding state indicator process of $X$,
\begin{equation}\label{eq:def-Hi}
H^x_t := \I_\set{ X_t = x }, \quad t\in[0,T].
\end{equation}
Accordingly, we define a column vector  $ H_t =(H^x_t, x\in S)^\top $, where $\top$
denotes transposition. Similarly, for $x,y\in S,\ x\ne y,$ we define process $H^{xy}$  that counts the number of transitions from $x$ to $y$,
\begin{equation}\label{eq:def-Hxy}
	H^{xy}_t := \# \set{ u
\leq t: X_{u-} = x\  \textrm{and}\ X_u =y } = \int_{] 0,t]} \!\!\!H^{x}_{u-} d H^{y}_u, \quad \tT.
\ee

The following definition generalizes the concept of the generator matrix (or intensity matrix) of a Markov chain.

\begin{definition}\label{def:F-intensity}
We say that an $\FF$-adapted (matrix valued) process $\Lambda_t=[\lambda^{xy}_t]_{x,y \in S }$ such that
\begin{align}\label{eq:int-cond}
	\lambda^{xy}_t \geq 0, \quad \forall x, y \in S, x \neq y, \quad  \text{and} \quad
	\sum_{y \in S }\lambda^{xy}_t = 0, \quad \forall x \in S,
\end{align}
is an $\FF$-stochastic generator or an $\FF$-intensity matrix process for $X$, if the process $M := (M^x, x\in S)^\top$ defined as
\be\label{eq:Mart-M-nowy}
     M_t = H_t - \int_0^t \Lambda_u^\top H_{u} du, \quad t\in[0,T],
\ee
is an $\FF \vee \GG\,$--$\,$local martingale (with values in $\r^d$).
\end{definition}
\brem\label{rem:cacy}
We remark that even though the above definition is stated for an $(\FF,\GG)$-CMC process $X$, it applies to $S$-valued semimartingales.
\erem

We will now discuss the question of uniqueness of $\FF$-intensity.

\begin{definition}
We say that two processes  $\Lambda$ and $\wh \Lambda$ are equivalent relative to $X$ if
\be\label{eq:rown}
\int_0^t (\Lambda_u - \wh \Lambda_u)^\top H_{u} du = 0, \quad \forall t \in [0,T].
\ee
\end{definition}

\begin{proposition}\label{rem:int-wersje}
Let $X$ be an $(\FF,\GG)$-CMC.
\begin{itemize}
\item[i)] If $\Lambda$ and $\wh \Lambda$ are $\FF$-intensities of $X$, then  they are equivalent relative to $X$. In particular $\FF$-intensity of $X$ is unique up to equivalence relative to $X$.

\item[ii)] Let $\Lambda$  be an $\FF$-intensity of $X$. If $\wh \Lambda$ is an $\FF$-adapted process equivalent to $\Lambda$ relative to $X$, then $\wh \Lambda$  is $\FF$-intensity of $X$.
\end{itemize}
\end{proposition}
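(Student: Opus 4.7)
The plan for both parts rests on a single identity obtained by subtracting the defining equation \eqref{eq:Mart-M-nowy} written for $\Lambda$ from the one written for $\wh\Lambda$:
\begin{equation*}
M_t - \wh M_t \;=\; \int_0^t \bigl(\wh\Lambda_u - \Lambda_u\bigr)^\top H_u\, du, \qquad t\in[0,T].
\end{equation*}
Note that the right-hand side is, pathwise, absolutely continuous in $t$, hence continuous and of finite variation on $[0,T]$, and vanishes at $t=0$. This one observation drives the whole argument.

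For part (i), suppose both $\Lambda$ and $\wh\Lambda$ are $\FF$-intensities of $X$. Then by Definition \ref{def:F-intensity} both $M$ and $\wh M$ are $\FF\vee\GG$-local martingales, and so is their difference. By the displayed identity this difference is also continuous, of finite variation on $[0,T]$, and starts at $0$. Invoking the classical fact that a continuous local martingale of finite variation starting at $0$ is indistinguishable from $0$, we conclude that $M_t - \wh M_t = 0$ for all $t\in[0,T]$ almost surely, which is exactly \eqref{eq:rown}. Uniqueness of the $\FF$-intensity up to equivalence relative to $X$ is then immediate.

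For part (ii), the equivalence hypothesis $\int_0^t(\Lambda_u - \wh\Lambda_u)^\top H_u\, du = 0$ forces the displayed identity to yield $\wh M_t = M_t$ for every $t\in[0,T]$. Since $M$ is an $\FF\vee\GG$-local martingale by assumption, so is $\wh M$; together with the $\FF$-adaptedness of $\wh\Lambda$ (and with the matrix conditions \eqref{eq:int-cond} imposed on $\wh\Lambda$, either as part of the hypothesis or verified separately), this is exactly what Definition \ref{def:F-intensity} requires. The only substantive input in the whole argument is the classical vanishing theorem for continuous local martingales of finite variation starting at $0$; this is the main, and rather mild, obstacle to spell out in the write-up. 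Everything else reduces to direct algebra at the level of \eqref{eq:Mart-M-nowy}.
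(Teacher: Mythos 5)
Your proof is correct and follows essentially the same route as the paper: subtract the two martingale identities from \eqref{eq:Mart-M-nowy}, observe the difference is a continuous finite-variation $\FF\vee\GG$-local martingale starting at $0$, hence null, and for part (ii) use the equivalence to rewrite $M$ with $\wh\Lambda$ in place of $\Lambda$. Your parenthetical remark about verifying \eqref{eq:int-cond} for $\wh\Lambda$ in part (ii) is a point the paper glosses over, but otherwise the two arguments coincide.
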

\begin{proof}
i)
By assumption, $M$ given by \eqref{eq:Mart-M-nowy} and $\wh M$ defined as
\[
     \wh{M}_t = H_t - \int_0^t \wh \Lambda_u^\top H_{u} du, \quad t\in[0,T],
\]
are $\FF \vee \GG$--$\,$local martingales.
We have that
\[
\wh{M}_t -  M_t = \int_0^t (\Lambda_u - \wh \Lambda_u )^\top H_u du.
\]
Thus $\wh{M} - M$ is a continuous finite variation $\FF \vee \GG$-martingale starting from $0$, and hence it is a constant null process.  Thus \eqref{eq:rown} holds.

ii)
Note that \eqref{eq:rown} implies that for $\FF \vee \GG$ martingale ${M}$ given by \eqref{eq:Mart-M-nowy} it holds
\[
     {M}_t
     =
     H_t - \int_0^t \Lambda_u^\top H_{u} du
     +
     \int_0^t (\Lambda_u - \wh \Lambda_u )^\top H_u du
     =
     H_t - \int_0^t \wh \Lambda_u^\top H_{u} du
     , \quad t\in[0,T].
\]
Thus $\wh \Lambda$ is an $\FF$-intensity of $X$.
\finproof
\end{proof}


In {\cite[Example 3.9]{BieJakNie2014b}} we exhibit an $(\FF,\GG)$-CMC $X$, which admits two different intensities $\Gamma$ and $\Lambda$ that are eqivalent relative to $X$.

In the case of classical Markov chains with finite state space, intensity matrix may not exist if the matrix of transition probabilities is not differentiable (e.g. when $X$ is not quasi left continuous). In the case of $(\FF,\GG)$-CMC the situation is similar. That is, there exist $(\FF,\GG)$-CMCs that do not admit $\FF$-intensities. We illustrate this possibility by means of the following example (see \cite{BieJakNie2015} for details):
\begin{example}
Suppose that   $(\Omega, \mathcal{A}, \mathbb{P})$ supports a real valued standard  Brownian motion $W,$  and a  random variable $E$ with unit exponential distribution\footnote{That is, $E$ is exponentially distributed with mean $1$.} and independent from $W.$
Define a nonnegative process $\gamma$, by formula
\[
	\gamma_t := \sup_{u \in [0,t]} W_u, \quad t\geq 0.
\]
By definition, $\gamma$ is an increasing and continuous process. It is well known (cf. Section 1.7 in  It\^{o} and McKean \cite{ItoMcK1974}) that trajectories of $\gamma$ are not absolutely continuous with respect to the Lebesgue measure on real line. It is shown in \cite{BieJakNie2015} that the process $X$ defined by
\[
	X_t := \I_\set{ \tau \leq  t}, \quad t \geq 0,
\]
where
\[
	\tau := \inf \set{ t > 0 : \gamma_t > E }
\]
is an $(\FF^W, \FF^X)$-CMC which does not admit an $\FF^W$-intensity matrix.
\end{example}
Theorem \ref{thm:int-comp} below provides more insight into the issue of existence of $\FF$-intensity for an $(\FF,\GG)$-CMC.

\subsubsection{Intensity of an $(\FF,\GG)$-CMC and $\FF \vee \GG$-compensators of counting processes $H^{xy}$}

 The  $\FF$-intensity matrix of an $(\FF,\GG)$-CMC $X$ is related to the  $\FF \vee \GG$-compensators of processes $H^{xy}$, $x,y \in S$, $x\neq y$. In fact, we have the following result, which is a special case of    \cite[Lemma 4.3 ]{JakNie2010}, which deals with general jump semimartingales, and thus its proof is omitted.
\begin{theorem}\label{thm:int-comp}
 Let $X$ be an $(\FF,\GG)$-CMC.
\begin{itemize}
\item[ \rm 1)]
Suppose that $X$ admits an $\FF$-intensity matrix process $\Lambda$. Then for every $x,y \in S$, $x\neq y$, the process $H^{xy}$ admits an absolutely continuous $\FF \vee \GG$--compensator given as $\int_0^\cdot H^{x}_u \lambda^{xy}_u du,$ i.e.  the
process $K^{xy}$ defined by
\begin{align}\label{eq:Mxy}
	K^{xy}_t =  H^{xy}_t - \int_0^t H^{x}_u \lambda^{xy}_u du, \quad \tT,
\end{align}
is an $\FF \vee \GG\,$--$\,$local martingale.

\item[\rm 2)]
Suppose that we are given a family of nonnegative $\FF$-progressively measurable  processes $\lambda^{xy}$, $x,y \in S$, $x\neq y$, such that
for every $x,y \in S$, $x\neq y$, the process $K^{xy}$ given in \eqref{eq:Mxy}
is an $\FF \vee \GG\,$--$\,$local martingale. Then, the  matrix valued process $\Lambda_t=[\lambda^{xy}_t]_{x,y \in S }$, with diagonal elements defined as
\[
	\lambda^{xx} =
-\sum_{y \in S, y \neq x} \lambda^{xy}, \quad x\in S,
\]
is an $\FF$-intensity matrix of $X$.
\end{itemize}
\end{theorem}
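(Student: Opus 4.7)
The plan is to derive both directions from an elementary bookkeeping identity relating the state indicators $H^y$ to the transition counters $H^{xy}$, combined with the fact that stochastic integrals of bounded predictable integrands against local martingales are again local martingales.

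For Part 1, assume $M^y_t = H^y_t - \int_0^t(\Lambda_u^\top H_u)^y\,du$ is an $\FF\vee\GG$-local martingale. Starting from the definition $H^{xy}_t = \int_{]0,t]} H^x_{u-}\,dH^y_u$, I would substitute $dH^y_u = dM^y_u + (\Lambda_u^\top H_u)^y\,du$ to obtain
\[
H^{xy}_t = \int_{]0,t]} H^x_{u-}\,dM^y_u + \int_0^t H^x_{u-} \sum_{z\in S}\lambda^{zy}_u H^z_u\,du.
\]
The first integral is an $\FF\vee\GG$-local martingale because $H^x_{-}$ is predictable and bounded by $1$. In the Lebesgue integral I would replace $H^x_{u-}$ by $H^x_u$ (legitimate since the càdlàg process $X$ has only countably many jumps, a Lebesgue-null set), and then use the mutual orthogonality of the state indicators (so $H^x_u H^z_u = 0$ for $z\neq x$) to collapse the sum to $H^x_u\lambda^{xy}_u$. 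This identifies $K^{xy}$ with the stochastic integral $\int_{]0,\cdot]} H^x_{u-}\,dM^y_u$, which is the desired local martingale.

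For Part 2, I would start from the elementary bookkeeping identity
\[
H^y_t = H^y_0 + \sum_{x\neq y}\bigl(H^{xy}_t - H^{yx}_t\bigr),
\]
which merely records that every entry into or exit from state $y$ is accounted for by exactly one of the counters. Combined with the definition $\lambda^{yy}_u = -\sum_{x\neq y}\lambda^{yx}_u$, this gives $(\Lambda_u^\top H_u)^y = \sum_{x\neq y}\lambda^{xy}_u H^x_u - \sum_{x\neq y}\lambda^{yx}_u H^y_u$, and hence
\[
M^y_t = H^y_0 + \sum_{x\neq y}\bigl(K^{xy}_t - K^{yx}_t\bigr).
\]
As a finite linear combination of $\FF\vee\GG$-local martingales plus an initial value, $M^y$ is itself an $\FF\vee\GG$-local martingale, so $\Lambda$ is an $\FF$-intensity of $X$.

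The only genuine technical point is the substitution of $H^x_{u-}$ by $H^x_u$ inside the Lebesgue integral in Part 1, which is exactly why progressive measurability of the $\lambda^{xy}$ (rather than predictability) suffices in the hypotheses. Beyond that, both directions are algebraic rearrangements driven by the balance identity for $H^y$ in terms of the $H^{xy}$; the uniqueness statement in Proposition \ref{rem:int-wersje} then guarantees that the particular choice of diagonal entries in Part 2 is the correct one.
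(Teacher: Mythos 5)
Your proof is correct. It is worth noting, however, that the paper does not actually prove this theorem: it invokes Lemma 4.3 of Jakubowski and Niew\k{e}g{\l}owski \cite{JakNie2010}, which treats general jump semimartingales, and omits the argument entirely. Your two steps are precisely the finite-state specialization of what that citation hides. In Part 1 you identify $K^{xy}_t=\int_{]0,t]}H^x_{u-}\,dM^y_u$ by writing $dH^y_u=dM^y_u+(\Lambda_u^\top H_u)^y\,du$, discarding the (countable, hence Lebesgue-null) set where $H^x_{u-}\neq H^x_u$, and collapsing $\sum_{z}\lambda^{zy}_uH^z_uH^x_u$ to $\lambda^{xy}_uH^x_u$; since $H^x_{-}$ is predictable and bounded, the stochastic integral against the local martingale $M^y$ is again a local martingale. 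In Part 2 the balance identity $H^y_t=H^y_0+\sum_{x\neq y}\bigl(H^{xy}_t-H^{yx}_t\bigr)$ together with the prescribed diagonal entries gives $M^y_t=H^y_0+\sum_{x\neq y}\bigl(K^{xy}_t-K^{yx}_t\bigr)$, a finite sum of local martingales null at $0$ plus a bounded initial value. The only point you leave implicit is that an $S$-valued c\`adl\`ag path has finitely many jumps on $[0,T]$ (jumps cannot accumulate when the state space is finite), which is what makes the counters $H^{xy}$ finite and the balance identity legitimate; this is standard and easily supplied. What your route buys is a self-contained, elementary proof inside the paper, and it makes transparent why progressive measurability of the $\lambda^{xy}$ suffices; what the paper's citation buys is generality, since \cite{JakNie2010} establishes the corresponding statement for arbitrary jump semimartingales rather than just finite-state chains.
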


We see that the $\FF$-intensity may not exist since $\FF \vee \GG$-compensators of $H^{xy}$ may not be absolutely continuous with respect to Lebesgue measure.
 On the other hand, absolute continuity of $\FF \vee \GG$-compensators of all processes
$H^{xy}$, for $x{, y} \in S$, $x\ne y$, is not sufficient for existence of an $\FF$-intensity. This is due to the fact that the density of $\FF \vee \GG$ compensator is, in general,  $\FF \vee \GG$-adapted, whereas  the $\FF$-intensity is only $\FF$-adapted.

In order to focus our study, we now introduce the following restriction:
\begin{center} \framebox[1.02\width][c]{ \strut In the rest of this paper we restrict ourselves to CMCs, which admit $\FF$-intensity.}
\end{center}

\noindent
CMCs that do not admit intensities will be studied in a follow-up paper.

\subsection{$(\FF,\GG)$-CMC as a pure jump semimartingale}

It is important to note that an $(\FF,\GG)$-CMC  $X$ admitting $\FF$-intensity process $\Lambda$  can be viewed as a pure jump  semimartingale,\footnote{We adhere to the standard convention that semimartingale processes (taking values in finite dimensional spaces) are c\` adl\` ag.} with values in $S$, whose corresponding random jump measure $\mu$ defined by (cf. Jacod \cite{Jac1975})
\[
	\mu(\omega, dt,dz )=\sum_{ s < T } \delta_{(s, \Delta X_{s}(\omega)}) (dt,dz) \I_\set{  \Delta X_{s}(\omega)\ne 0} = \sum_{ n \geq 1 } \delta_{(T_n(\omega), {\Delta X_{T_n(\omega)}}(\omega))} (dt,dz) \I_\set{ T_n(\omega) < T},
\]
where
\[
	T_n := \inf \set{ t: T_{n-1}<t\leq T,\ X_t \neq X_{T_{n-1}}}\wedge T, \quad T_0 = 0,
\]
has the $\FF \vee \GG$ predictable projection under $\P$ (the $(\FF \vee \GG, \P)$--compensator)  given as
\begin{align}\label{eq:nu}
	\nu(\omega, dt,dz) = \sum_{x \in S} H^{x}_{t}  \Big( \sum_{y \in S \setminus \set{x}}
	\delta_{y-x}(dz)  \lambda^{xy}_t \Big)dt
	=
\sum_{x \in S} \I_\set{ X_t = x}  \Big( \sum_{y \in S \setminus \set{x}}
	\delta_{y-x}(dz)  \lambda^{xy}_t \Big)dt .
\end{align}
So the problem of construction of  an $(\FF,\GG)$-CMC with an $\FF$-intensity (matrix) process $\Lambda$ is equivalent to the problem of construction of any $\GG$-adapted, $S$-valued pure jump semimartingale with the $(\FF \vee \GG, \P)$--compensator $\nu$ given by \eqref{eq:nu}, and additionally satisfying condition \eqref{eq:CMC-def-1}.

\begin{remark}\label{rem:important}
With a slight abuse of terminology, we shall refer to  a $\GG$-adapted, $S$-valued pure jump semimartingale $X$ with the $\FF \vee \GG$ compensator $\nu$ given by \eqref{eq:nu}, as to a $\GG$-adapted, $S$-valued pure jump semimartingale admitting the  $\FF$-intensity process $\Lambda$. In particular, this also means that the process $M$ corresponding to $X$ as in \eqref{eq:Mart-M-nowy} (see Remark \ref{rem:cacy})  is an $\FF \vee \GG\,$--$\,$local martingale {and,}
even though $X$ is not necessarily  $(\FF, \GG)$-CMC, the conclusions  1) and 2) of Theorem \ref{thm:int-comp} hold.
\end{remark}

Theorem \ref{thm:CMC-char} below shows that a $\GG$-adapted, $S$-valued pure jump semimartingale admitting $\FF$-intensity process $\Lambda$ is, under some additional conditions,  an $(\FF, \GG)$-CMC with the same $\FF$-intensity process $\Lambda$. Before stating the theorem, we recall the notion of immersion between two filtrations.

\bd
We say that a filtration $\FF$ is $\P$-{\it immersed} in a filtration { $\HH$} if $\FF \subset \HH$ and if every $(\P,\FF)\,$--$\,$local martingale is a  $(\P,\HH)\,$--$\,$local martingale.
\ed

\noindent We now have,

\begin{theorem}\label{thm:CMC-char}
Assume that \begin{equation}
\label{asm:immersion}
\ \FF\ \textrm{is $\P$--immersed in}\ \FF \vee \GG.
\end{equation}
Let $X$ be a $\GG$-adapted, $S$-valued pure jump semimartingale admitting the  $\FF$-intensity process $\Lambda$.
Moreover suppose that
\begin{align}
\label{asm:ort-to-M}
 & \ \textrm{all real valued}\  \FF-\textrm{local martingales are orthogonal to components  $M^x, \, x\in S,$} \\
& \textrm{of process $M$  given by \eqref{eq:Mart-M-nowy}}. \nonumber
\end{align}
 Then $X$ is an $(\FF, \GG)$-CMC with the  $\FF$-intensity process $\Lambda.$\footnote{We refer to He, Wang and Yan \cite[Definition 7.33]{HeWanYan1992}, for notion of orthogonality of local martingales.}
\end{theorem}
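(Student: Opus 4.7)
The plan is to identify the conditional law of $(X_{t_1},\ldots,X_{t_k})$ given the enlarged $\sigma$-algebra $\wh{\G}_t=\F_T\vee\G_t$ explicitly as a function of $X_t$ and $\F_T$, and then to project this law down separately to $\F_t\vee\G_t$ and $\F_t\vee\sigma(X_t)$ using immersion; the two projections will coincide, yielding \eqref{eq:CMC-def-1}.

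The main technical step is to prove that the vector martingale $M$ of \eqref{eq:Mart-M-nowy} is also an $\wh{\GG}$-local martingale. Given any bounded $\F_T$-measurable $U$ and any bounded $(\F_t\vee\G_t)$-measurable $V$, I set $N_u:=\E[U\mid\F_u]$ and compute
$$\E\bigl[(M^x_u-M^x_t)\,UV\bigr]=\E\bigl[V\,(N_u M^x_u-N_t M^x_t)\bigr]-\E\bigl[V\,M^x_t(N_u-N_t)\bigr]=0,$$
where (a) immersion \eqref{asm:immersion} promotes $N$ from an $\FF$-martingale to an $(\FF\vee\GG)$-martingale, so the second term vanishes, and (b) orthogonality \eqref{asm:ort-to-M} (in the sense of \cite[Def.~7.33]{HeWanYan1992}) makes $N M^x$ a local $(\FF\vee\GG)$-martingale, so the first term vanishes after localization. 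A monotone-class argument on the product class $\{UV\}$ then extends this identity to all bounded $\wh{\G}_t$-measurable test functionals, yielding $\E[M_u-M_t\mid\wh{\G}_t]=0$. This is the crux of the proof, since initial enlargement by $\F_T$ generally destroys the martingale property; orthogonality \eqref{asm:ort-to-M} is precisely what preserves it.

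With $M$ now an $\wh{\GG}$-local martingale, and with $\Lambda_s$ being $\F_T$-measurable (hence $\wh{\G}_t$-measurable) for $s\le T$, I condition the identity $H_u=H_t+\int_t^u\Lambda_s^\top H_s\,ds+(M_u-M_t)$ on $\wh{\G}_t$ and apply Fubini to obtain the random linear ODE
$$p(u)=H_t+\int_t^u\Lambda_s^\top p(s)\,ds,\qquad p(u):=\E[H_u\mid\wh{\G}_t],\quad u\in[t,T],$$
with $\F_T$-measurable coefficient. Its unique (Peano--Baker) solution has the form $p(u)=P(t,u)^\top H_t$ for an $\F_T$-measurable propagator $P(t,u)$, so $\P(X_u=y\mid\wh{\G}_t)=P(t,u)_{X_t,y}$. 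Iterating this identity across $t\le t_1<\cdots<t_k$ via the tower property yields
$$\P(X_{t_1}=x_1,\ldots,X_{t_k}=x_k\mid\wh{\G}_t)=P(t,t_1)_{X_t,x_1}\,P(t_1,t_2)_{x_1,x_2}\cdots P(t_{k-1},t_k)_{x_{k-1},x_k}=:g(\cdot,X_t),$$
with $g(\cdot,x)$ being $\F_T$-measurable for every $x\in S$.

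To finish, set $A=\{X_{t_1}=x_1,\ldots,X_{t_k}=x_k\}$ and decompose on $\{X_t=x\}$. Using that $X_t$ is both $\G_t$- and $\sigma(X_t)$-measurable,
$$\P(A\mid\F_t\vee\G_t)=\sum_{x\in S}\I_{\{X_t=x\}}\E[g(\cdot,x)\mid\F_t\vee\G_t],\qquad \P(A\mid\F_t\vee\sigma(X_t))=\sum_{x\in S}\I_{\{X_t=x\}}\E[g(\cdot,x)\mid\F_t\vee\sigma(X_t)].$$
By immersion the first inner conditional expectation equals $\E[g(\cdot,x)\mid\F_t]$; since $\sigma(X_t)\subset\G_t$, the tower identity $\E[\,\cdot\mid\F_t\vee\sigma(X_t)]=\E[\E[\,\cdot\mid\F_t\vee\G_t]\mid\F_t\vee\sigma(X_t)]$ combined with immersion shows the second inner conditional expectation also equals $\E[g(\cdot,x)\mid\F_t]$. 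Thus the two sides of \eqref{eq:CMC-def-1} agree, and Definition \ref{def:F-intensity} identifies $\Lambda$ as the $\FF$-intensity of the resulting CMC. The expected main obstacle is Step~1, where the density/monotone-class extension from the multiplicative class $\{UV\}$ to all of $\wh{\G}_t$-measurable test functions must be combined with careful localization to handle the merely local martingale property of $M$.
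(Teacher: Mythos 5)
Your proposal takes a genuinely different route from the paper's, and its architecture is sound, but two steps need repair before it is a complete proof.

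\medskip\noindent
\emph{Comparison of routes.} The paper never leaves the filtration $\FF\vee\GG$: for a fixed grid $t\le t_1\le\cdots\le t_k$ it builds the explicit processes $V^n_t=\prod_{l<n}\I_\set{X_{t_l}=x_l}H^\top_t\,\E\big(Z_tY_{t_n}e_{x_n}\cdots\mid\F_t\big)$ from the forward/backward random ODE solutions $Y,Z$, shows via Lemma \ref{lem:aux-mart} (this is where immersion and orthogonality enter, through the product of the orthogonal local martingales $H^\top Z$ and $\E(Y_vU\mid\F_\cdot)$) that each $V^n$ is an $\FF\vee\GG$-martingale on $[t_{n-1},t_n]$, and identifies $V^n$ with the Doob martingale $N_t=\P(A\mid\F_t\vee\G_t)$ by backward induction; the CMC property follows because $V^1_t$ is $\F_t\vee\sigma(X_t)$-measurable. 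You instead pass to the initially enlarged filtration $\wh{\GG}$, argue that $M$ of \eqref{eq:Mart-M-nowy} survives the enlargement as a local martingale, and extract an $\F_T$-measurable propagator for $\P(X_u=\cdot\mid\wh{\G}_t)$. That intermediate statement is precisely the $(\FF,\GG)$-DSMC property of Definition \ref{def:DSMC}, so your argument, if completed, proves Theorem \ref{thm:CMC-char} and the paper's later Theorem \ref{prop:CMCisDSMC} in one pass (your final projection step is essentially the paper's Proposition \ref{prop:DSMCisCMC}). That is a real payoff: it makes explicit that immersion plus orthogonality yields the stronger doubly stochastic property, not merely the CMC property.

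\medskip\noindent
\emph{Gap 1 (write-up level).} In the display $\E[(M^x_u-M^x_t)UV]=\E[V(N_uM^x_u-N_tM^x_t)]-\E[VM^x_t(N_u-N_t)]$, the first equality is not algebra: the right-hand side equals $\E[VN_u(M^x_u-M^x_t)]$, so the equality amounts to replacing $U$ by $N_u$, which requires conditioning on $\F_u\vee\G_u$ and the immersion characterization $\E[U\mid\F_u\vee\G_u]=\E[U\mid\F_u]$ (Proposition 5.9.1.1 of \cite{JeaYorChe2009}, exactly as used in Step 3 of the proof of Theorem \ref{thm:CMC-under-constr}). You credit immersion only with killing the second term; it is needed already for the decomposition itself. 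The localization you defer is also genuinely fiddly --- one must stop with $\FF\vee\GG$-stopping times (which remain $\wh{\GG}$-stopping times), split on $\set{\tau_n\le t}$, and apply optional sampling to the bounded martingale $N$ --- but it does go through.

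\medskip\noindent
\emph{Gap 2 (substantive).} In Step 2 you condition $H_u=H_t+\int_t^u\Lambda^\top_sH_s\,ds+(M_u-M_t)$ on $\wh{\G}_t$. This needs $\E[M_u-M_t\mid\wh{\G}_t]=0$ and a conditional Fubini, i.e.\ integrability of the drift $\int_t^u\Lambda^\top_sH_s\,ds$, which is \emph{not} implied by the hypothesis that $M$ is a local martingale (this is why the DSMC intensity in Definition \ref{defdef} imposes the extra integrability condition \eqref{nr2/8}). As written, ``condition and apply Fubini'' does not follow from Step 1. The clean repair is the paper's boundedness trick: from $\wh{\GG}$-local martingality of $M$, integration by parts gives $d(H^\top_sZ_s)=dM^\top_s\,Z_s$, so $L_s=H^\top_sZ_s$ is an $\wh{\GG}$-local martingale, hence so is $s\mapsto H^\top_sZ_sY_v$ (the matrix $Y_v$ is $\wh{\G}_0$-measurable); since the entries of $Z_sY_v$ lie in $[0,1]$ (Lemma \ref{eq:YZY-are-bounded}), this process is bounded, hence a true $\wh{\GG}$-martingale, and evaluating at $s=v$ yields $\P(X_v=y\mid\wh{\G}_s)=(Z_sY_v)_{X_s,y}$ with no Fubini argument needed. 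With these two repairs your proof is complete and, in exchange for the extra work in the enlarged filtration, delivers more than the statement asks for.
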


\begin{proof}
Let us fix $0 = t_0 \leq t_1\leq \ldots \leq t_k\leq T,$ and $x_1,\ldots,x_k\in S.$
It is enough to show that the martingale $N$, given as
\[
N_t = \P( X_{t_1} =x_1, \ldots, X_{t_k} = x_k  | \F_t \vee \G_t), \quad \tT,
\]
is such that $N_t$ is $\F_t \vee \sigma(X_t)$ measurable for any $t \in [0,t_1]$. Indeed, this implies that
\[
	\P(  X_{t_1} =x_1, \ldots, X_{t_k} = x_k |\F_t \vee \G_t)
=
	\P(  X_{t_1} =x_1, \ldots, X_{t_k} = x_k |\F_t \vee \sigma(X_t)),\quad t \in [0,t_1],
\]
which is the $(\FF ,\GG)$-CMC property.
To this end, for each $n=1, \ldots, k,$ we define a process $V^n_t$ by  \[
V^n_t := \prod_{l=1}^{n-1} \I_\set{X_{t_l} = x_l} H^\top_t \E\bigg(Z_t Y_{t_n}e_{x_n} \prod_{m=n}^{k-1} e^{\top}_{x_m}Z_{t_m} Y_{t_{m+1}}e_{x_{m+1}}| \F_t \bigg),\quad \tT,
\]
where $e_{x}$ denotes a column vector in $\r^d$ with $1$ at the coordinate corresponding to state $x$ and with zeros otherwise, and
 $Z$, $Y$ are solutions of the random ODE's\footnote{The symbol $"\mathrm{I}``$  used below is a generic symbol for the identity matrix, whose dimension may vary depending on the context.}
\begin{align*}
d Z_t  &= - \Lambda_t Z_t dt, \quad\quad Z_0 = \mathrm{I}, \quad \tT,
\\
d Y_t  &= Y_t \Lambda_t  dt, \quad\quad\quad Y_0 = \textnormal{I}, \quad \tT.
\end{align*}
 We will show, that
\begin{equation}\label{coin}
V^n_t=N_t, \ \textrm{for}\ t\in  [t_{n-1}, t_n],\ n=1,2,\ldots,k,
\end{equation}
which, in particular, implies that  for every $t \in [0,t_1]$ the random variable $N_t = V^1_t$ is measurable with respect to $\F_t \vee \sigma(X_t)$.

We first note that, in view of Lemma \ref{lem:aux-mart} in Appendix B, the process $V^n$ is
an $\FF \vee \GG$ martingale on $[t_{n-1}, t_n]$.
Moreover,  we have that
\begin{equation}\label{eq:compatib}
V^n_{t_n} = V^{n+1}_{t_n}.
\end{equation}
Indeed,

\begin{align*}
V^{n+1}_{t_n}
&=
\prod_{l=1}^{n} \I_\set{X_{t_l} = x_l} H^\top_{t_n}  \E\bigg(Z_{t_n} Y_{t_{n+1}}e_{x_{n+1}}   \prod_{m=n+1}^{k-1} e^{\top}_{x_m}Z_{t_m} Y_{t_{m+1}}e_{x_{m+1}}| \F_{t_n} \bigg)
\\
&=
\prod_{l=1}^{n-1} \I_\set{X_{t_l} = x_l} H^\top_{t_n} e_{x_n}  H^\top_{t_n} \E\bigg( Z_{t_n} Y_{t_{n+1}}e_{x_{n+1}} \prod_{m=n+1}^{k-1} e^{\top}_{x_m}Z_{t_m} Y_{t_{m+1}}e_{x_{m+1}}| \F_{t_n} \bigg)
\\
&=
\prod_{l=1}^{n-1} \I_\set{X_{t_l} = x_l} H^\top_{t_n}\E\bigg(
Z_{t_n}
Y_{t_n}e_{x_n} e_{x_n}^\top
Z_{t_n} Y_{t_{n+1}}e_{x_{n+1}} \prod_{m=n+1}^{k-1} e^{\top}_{x_m}Z_{t_m} Y_{t_{m+1}}e_{x_{m+1}}| \F_{t_n} \bigg)
\\
&=
\prod_{l=1}^{n-1} \I_\set{X_{t_l} = x_l}  H^\top_{t_n}
\E\bigg(Z_{t_n}
Y_{t_n}e_{x_n}
\prod_{m=n}^{k-1} e^{\top}_{x_m}Z_{t_m} Y_{t_{m+1}}e_{x_{m+1}}| \F_{t_n} \bigg) = V^n_{t_n},
\end{align*}
where the third equality follows from Lemma \ref{eq:YZY-are-bounded} formula \eqref{app1}, and from the fact that
\[
H^\top_{t_n} e_{x_n}  H^\top_{t_n} = H_{t_n}^\top e_{x_n} e_{x_n}^\top.
\]
We will finish the proof by demonstrating \eqref{coin} with use of  backward induction. Towards this end, we start from the last interval, i.e. $n=k$. Observing that
\[
V^k_{t_k}  = \prod_{l=1}^{k-1} \I_\set{X_{t_l} = x_l} H^\top_{t_k} \E\bigg(Z_{t_k} Y_{t_k}e_{x_k} | \F_{t_k} \bigg)
=
\prod_{l=1}^{k-1} \I_\set{X_{t_l} = x_l} H^\top_{t_k} e_{x_k}
=
\prod_{l=1}^{k} \I_\set{X_{t_l} = x_l}
,
\]
and using the martingale property of $V^k$ on  $[t_{k-1},t_{k}],$ we conclude that for $t\in [t_{k-1},t_{k}]$
\[
V^k_t = \E ( V^k_{t_k} | \F_t \vee \G_t ) = \P(X_{t_1} =x_1, \ldots, X_{t_k} = x_k | \F_t \vee \G_t) = N_t .
\]
Now, suppose that for some $n =2, \ldots, k-1,$ the process $V^n$ coincides with $N$ on $[t_{n-1}, t_n]$. This, together with \eqref{eq:compatib}, yields that
\[
N_{t_{n-1}}   = V^n_{t_{n-1}} = V^{n-1}_{t_{n-1}}.
\]
Thus, by the martingale property of $V^{n-1}$ on
 the interval $[t_{n-2},t_{n-1}]$, we obtain
 that
\[
	V^{n-1}_t = \E ( N_{t_{n-1}}   | \F_t \vee \G_t) =  N_t, \quad
t \in [t_{n-2},t_{n-1}].
\]
So the (backward) induction principle completes the proof.

\finproof
\end{proof}

\brem\lab{rem:ort-no-cj}
	A sufficient condition for orthogonality of real valued $\FF\,$--$\,$local martingales and components of process $M$ is that $\FF\,$--$\,$local martingales and the process $M$ do not have common jumps or, equivalently, that $\FF\,$--$\,$local martingales and the process $X$ do not have common jumps.
Indeed, let $Z$ be an $(\FF, \P)\,$--$\,$local martingale. Since $M^x$ is  a local martingale of finite variation we have that
\[
[Z,M^x]_t = \sum_{0 < u \leq t} \Delta Z_u \Delta M^x_u
=
\sum_{0 < u \leq t} \Delta Z_u \Delta H^x_u,
\quad
\tT.
\]
Now, note that $X$ jumps iff one of the processes $H^x, x \in S$, jumps.
Thus if $X$ and $Z$ do not have common jumps then $[Z,M^x]$ is the null process, hence it is a local martingale.
Consequently $Z$ and $M^x$ are orthogonal local martingales.
\erem

We complete this section with the following proposition, which furnishes an interesting example of filtrations $\FF$ and $\GG$ that satisfy conditions \eqref{asm:immersion}  and
\eqref{asm:ort-to-M} of Theorem \ref{thm:CMC-char}.
\begin{proposition}
Let $X$ be an $S$-valued pure jump semimartingale in its own filtration. Let $W$ be a Brownian motion in filtration $\FF^W \vee \FF^X.$
Then \eqref{asm:immersion}  and
\eqref{asm:ort-to-M} of Theorem \ref{thm:CMC-char} are satisfied
with $\GG = \FF^X$ and $\FF = \FF^W$.
\end{proposition}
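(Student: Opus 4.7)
The plan is to verify the two hypotheses of Theorem \ref{thm:CMC-char} in order, using the predictable representation property of Brownian motion as the main tool. Both the immersion property and the orthogonality property will flow from the fact that every real-valued $\FF^W$-local martingale is a stochastic integral with respect to $W$, and in particular has continuous trajectories.

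First, for the immersion assumption \eqref{asm:immersion}, I would take an arbitrary $(\P,\FF^W)$-local martingale $N$ and apply the Brownian martingale representation theorem to write $N_t = N_0 + \int_0^t \phi_s\,dW_s$ for some $\FF^W$-predictable process $\phi$. Since by hypothesis $W$ is a Brownian motion in the larger filtration $\FF^W \vee \FF^X$, and $\phi$ is \emph{a fortiori} $\FF^W \vee \FF^X$-predictable, the same stochastic integral is an $\FF^W \vee \FF^X$-local martingale. Hence $N$ is a local martingale in the larger filtration, which is exactly the immersion property.

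Next, for the orthogonality assumption \eqref{asm:ort-to-M}, the representation obtained above shows that every real-valued $\FF^W$-local martingale has continuous sample paths. On the other hand, the state indicators $H^x$ jump only at the jump times of $X$, so the compensated martingales $M^x$, which are of finite variation, have jumps only at the jump times of $X$. Consequently, any $\FF^W$-local martingale shares no common jumps with any $M^x$, and Remark \ref{rem:ort-no-cj} then yields the required orthogonality.

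The only subtle point will be the appeal to the martingale representation theorem, which requires $\FF^W$ to be the completed right-continuous filtration generated by $W$; this is guaranteed by the standing assumption that all filtrations satisfy the usual conditions. Beyond that, the argument is essentially immediate from the continuity of Brownian stochastic integrals and the pure-jump nature of $X$.
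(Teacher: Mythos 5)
Your proposal is correct and follows essentially the same route as the paper: both use the Brownian martingale representation theorem to transfer $\FF^W$-(local) martingales to the filtration $\FF^W \vee \FF^X$ (giving immersion), and both obtain the orthogonality condition from the continuity of $\FF^W$-local martingales. The only cosmetic differences are that the paper states the representation for square-integrable martingales and disposes of orthogonality in one line, while you work directly with local martingales and route the continuity argument explicitly through the no-common-jumps criterion of Remark \ref{rem:ort-no-cj}.
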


\begin{proof}
Note that  $W$ is also $\FF^W$  Brownian motion, thus any square integrable $\FF^W$-martingale $N$ can be represented as
\[
	N_t = N_0 + \int_0^t \phi_u d W_u, \quad t \in [0,T],
\]
for some $\FF$-predictable process $\phi$. The assumption that $W$ is a Brownian motion in $\FF^W \vee \FF^X$, implies that $N$ is also an $\FF^W \vee \FF^X$-martingale. This proves  that $\FF^W$ is immersed in $\FF^W \vee \FF^X$.
So \eqref{asm:immersion}  holds. Condition
\eqref{asm:ort-to-M} is satisfied, since all $\FF^W$ martingales are continuous.
\end{proof}

\brem
The assumption that $W$ is a Brownian motion in the filtration $\FF^W \vee \FF^X$ is in fact equivalent to immersion of $\FF^W$ in $\FF^W \vee \FF^X$.
\erem

\section{Construction of $(\FF,\GG)$--CMC via change of measure}\label{constr}

The construction of CMC given in this section generalizes the construction provided in \cite{BieJakNie2015}. In \cite{BieJakNie2015} the authors constructed CMCs that are starting from a given state with probability one. Here, we construct a process $(X_t)_{\tT}$  such that $X$ is an $(\FF, \GG)$--CMC  with the $\FF$-intensity matrix process $\Lambda$, and with $X_0$ satisfying
\be\lab{eq:desired-0}
\P( X_0 = x | \F_T ) = \P( X_0 = x | \F_0 ),\quad x\in S.
\ee
 Even though  in case of ordinary Markov chains a construction of a chain starting from a given state with probability one directly leads to construction of a chain with arbitrary initial distribution, this is not the case any more when one deals with CMCs. In fact, some non-trivial modifications of  the construction argument used in \cite{BieJakNie2015}  will need  to be introduced below.

\subsection{Preliminaries}

 In our construction we start from some underlying   probability space, say $(\Omega,{\mathcal A},\Q)$, on which we are given:
\begin{itemize}
\item[(I1)]$\ $ A (reference) filtration $\FF$.
\item[(I2)]$\ $ An  $S$-valued  random variable $\xi$, such that for any $x\in S$ we have that
\be
 \lab{mux}
	\Q( \xi = x | \F_T ) = \mu_x,
\ee
for some $\F_0\,$--$\,$measurable random variable $\mu_x$ taking values in $[0,1]$.
\item[(I3)]$\ $ A family
$\cN=(N^{xy})_{\substack{x,y \in S \\ y\neq x}}$ of mutually independent Poisson processes, that are independent of $\F_T \vee \sigma(\xi)$ and with non-negative intensities $(a^{xy})_{\substack{x,y \in S \\ y\neq x}}$ (of course for $a^{xy} = 0$ we put  $N^{xy} = 0$).
\end{itemize}

\brem
We observe that condition \eqref{mux} is satisfied iff $\xi=\xi'+\xi'',$ where $\xi'$  is $\F_0\,$--$\,$measurable  and $\xi''$ is orthogonal to $\cF_T$\footnote{A random variable $\xi''$ and sigma field $\cF_T$ are said to be orthogonal if $E_\Q(\xi'' \eta)=0,$ for every $\eta\in L^\infty(\cF_T)$.}. Indeed. First, assume \eqref{mux}. Since conditional expectation with respect to $\mathcal{F}_T$ is an orthogonal  projection from $L^2(\Omega,  \mathcal{A}, \mathbb{Q})$ onto
$L^2(\Omega,\mathcal{F}_T, \mathbb{Q})$ we have that $\xi = \xi' + \xi''$ where $\xi'$ is $\mathcal{F}_T$ measurable and $\xi''$ is orthogonal to $\mathcal{F}_T$, so that $\mathbb{E}(\xi''|\mathcal{F}_T)=0$. {Using \eqref{mux} we see that $\xi'$ is $\mathcal{F}_0$ measurable. }Conversely, if $\xi = \xi' + \xi''$, where  $\xi'$ is $\mathcal{F}_0$ measurable and $\xi''$ is orthogonal to $\mathcal{F}_T$ then \eqref{mux} is obviously satisfied.
\erem

In what follows we take
\be\label{eq:GGhat}
 \G_t= \Big (\bigvee _{\substack{x,y \in S \\ y\neq x}}\F^{N^{xy}}_t\Big ) \vee \sigma(\xi),\ee
which is known to be right-continuous (see \cite[Proposition 3.3]{Ame2000} ).

Given $\GG$ defined in \eqref{eq:GGhat}, we will construct $\wh \GG$-Markov chain, say $X$, as a solution of an appropriate stochastic differential equation. This is an intermediate step in our goal of constructing an $(\FF, \GG)$--CMC  with the $\FF$-intensity matrix process $\Lambda$, and with $X_0$ satisfying
\be\lab{eq:desired-1}
\P( X_0 = x | \F_T ) = \P( X_0 = x | \F_0 ),\quad x\in S,
\ee
for a measure $\P$ to be constructed later.
\bp
Let $A=[a^{xy}]_{x,y \in S},$ where the diagonal elements of $A$ are defined as $a^{xx}:=-\sum_{ \substack{y\in S \\ y\neq x} } a^{xy}.$ Assume that $\xi$ is an $S$-valued random variable  and $\cN = (N^{x,y})$ are Poisson processes satisfying (I3).
Then the unique strong solution of the  following SDE
\begin{equation}\label{eq:SDE}
 d X_t = \sum_{ \substack{x,y\in S \\ x\neq y} } (y - x) \I_\set{x} (X_{t-}) d
 N^{xy}_t,\quad \tT,
\quad X_0 = \xi,
\end{equation}
 is a $\wh{\GG}\,$--$\,$Markov chain
with the infinitesimal generator $A$. Moreover, $A$ is an $\FF$-intensity of $X$ under $\Q$.

\ep
\begin{proof} In view of (I3), the processes $N^{xy}$ and $N^{xy'}$, $y\ne y'$, do not jump together. Thus,
 the process $H^{xy}$ defined for $x,y \in S$, $x\neq y$  by
\[
	H^{xy}_t = \int_0^t H^x_{u-} d
 N^{xy}_u,\quad \tT,
\]
(cf. \eqref{eq:def-Hi} for definition of $ H^x$) counts number of transitions of $X$ from state $x$ to state $y$.  Independence of $N^{xy}$ from $\F_T\vee \sigma(\xi)$ implies that $N^{xy}$ is also a $\wh{\GG}$-Poisson processes with intensity $a^{xy}$. Thus, by boundedness and  $\wh{\GG}$-predictability of $(H^x_{t-})_{ \tT}$, the process $L^{xy}$ given as
\be\lab{comp}
	L^{xy}_t =
\int_0^t H^x_{u-} (d
 N^{xy}_u - a^{xy} du)
=
H^{xy}_t - \int_0^t H^x_{u-}  a^{xy} du=
H^{xy}_t - \int_0^t H^x_{u}  a^{xy} du, \quad t \in [0,T],
\ee
is a $\wh{\GG}$-martingale. Consequently, application of relevant characterization theorem \cite[Thm. 4.1]{JakNie2010} yields that $X$ is a $\wh{\GG}$-Markov chain with the infinitesimal generator $A$.


To finish the proof we observe that since $X$ given by \eqref{eq:SDE} is a pure jump process with finite variation, it is a semimartingale. The $(\wh \GG,\Q)$--compensator of the jump measure of $X$, that is, the jump characteristic of $X$ relative to $(\wh \GG,\Q)$, is given in terms of matrix $A$ (cf. \eqref{comp}). Moreover, since $X$ is adapted to filtration  $\FF\vee \GG\subseteq \wh \GG$, then we see that $X$ is a semimartingale with the $(\FF\vee \GG,\Q)$--compensator of its jump measure  given in terms of matrix $A$. Now,
$A$ is $\FF$--adapted (since it is deterministic), so, in view of the terminology introduced earlier (cf. Definition \ref{def:F-intensity}), $A$ is an $\FF$-intensity of $X$ under $\Q$.
\finproof
\end{proof}
The fact that $X$ is a Markov chain in filtration $\wh{\GG}$ will be critically important below.

\subsection{Canonical conditions}

Let $\Lambda_t=[\lambda^{xy}_t]_{x,y \in S }$ be matrix valued process satisfying the following conditions:
\begin{description}

\item[(C1)] $\Lambda$ is an $\FF$-progressively measurable  and it fulfills \eqref{eq:int-cond}.

\item[(C2)] The processes $\lambda^{xy}$,  $x,y \in S,$ $x\neq y,$ have countably many jumps $\Q$-a.s, and their trajectories admit left limits.
\end{description}
\bd\label{cancon}  A process $\Lambda $ satysfying conditions (C1) and (C2) is said to satisfy canonical conditions relative to the pair $(S,\FF).$
\ed
Any  $\FF$-adapted \cadlag process $\Lambda_t=[\lambda^{xy}_t]_{x,y \in S }$, for which \eqref{eq:int-cond} holds, satisfies canonical conditions.

\medskip\noindent
We are now ready to proceed with construction of a CMC via change of measure.

\subsection{Construction of a CMC}

In this section we provide a construction of a probability measure $\P$, under which the process $X$ {following  the dynamics \eqref{eq:SDE}} is an $(\FF,\GG)$-CMC with a given $\FF$-intensity matrix $\Lambda$ and with $\F_T$-conditional initial distribution satisfying \eqref{eq:desired-1}.

\begin{theorem}\label{thm:CMC-under-constr}
Let $\Lambda$ satisfy canonical conditions relative to the pair $(S,\FF)$ and assume that $\xi$ satisfies (I2). Suppose that $a^{xy}$, introduced in (I3),  is strictly positive for all $x,y \in S$, $x \neq y$.  Moreover, let $X$ be the unique solution of SDE \eqref{eq:SDE}.
For each pair $x,y \in S, x\neq y,$ define the processes $\kappa^{xy}$ as
\[
\kappa^{xy}_t= \frac{{\lambda_{t-}^{xy}}}{a^{xy}} -1,\quad t \in [0,T],
\]
and assume that the random variable $\vartheta$ given as
\[
    \vartheta = \prod_{ x,y \in S : x \neq y }\exp \bigg( - \int_0^T H^x_{u-} a^{xy} \kappa^{xy}_u du \bigg)  \prod_{0 < u \leq T} (1 + \kappa^{xy}_u \Delta H^{xy}_u),
\]
satisfies $\E_\Q \vartheta =1$.\footnote{There exist many different sufficient conditions ensuring  that {$\E_\Q \vartheta = 1$}. For example uniform boundedness of $\Lambda$ is such a condition.} Finally, define on $(\Omega, \wh{\G}_T)$ the probability $\P$ by
\begin{equation}\label{eq:dQ-dP}
\frac{d \P}{d \Q} \big|_{\wh{\G}_T } = \vartheta.
\end{equation}
Then

\begin{itemize}
\item[(i)] \be\lab{obciecie} \P_{|_{\F_T}} = \Q_{|_{\F_T}},\ee
\item[(ii)]  $X$ is an $(\FF, \GG)$--CMC under $\P$ with the $\FF$-intensity matrix process $\Lambda$, and with the initial distribution satisfying
\be\lab{eq:desired}
\P( X_0 = x | \F_T ) = \P( X_0 = x | \F_0 ) = \Q( X_0 = x | \F_T ),\quad x\in S.
\ee
\end{itemize}
\end{theorem}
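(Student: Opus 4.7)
The plan is to first establish (i) by showing $\vartheta$ is a $(\wh{\GG}, \Q)$-martingale starting at $\vartheta_0 = 1$ and then exploiting the inclusion $\F_T \subset \wh{\G}_0 = \F_T \vee \sigma(\xi)$. Each factor
\[
\mathcal{E}^{xy}_t := \exp\!\Big(-\!\int_0^t\! H^x_{u-} a^{xy} \kappa^{xy}_u\, du\Big) \prod_{0 < u \leq t}(1 + \kappa^{xy}_u \Delta H^{xy}_u)
\]
is the Doléans--Dade exponential of $\int_0^\cdot H^x_{u-} \kappa^{xy}_u \, dL^{xy}_u$ driven by the $(\wh{\GG}, \Q)$-local martingale $L^{xy}$ of \eqref{comp}, and is therefore itself a $(\wh{\GG}, \Q)$-local martingale. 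Since the Poisson processes $N^{xy}$ are independent under $\Q$, no two of the $H^{xy}$ jump simultaneously, so the $\mathcal{E}^{xy}$'s are pairwise strongly orthogonal and $\vartheta = \prod_{x\neq y} \mathcal{E}^{xy}$ is a $(\wh{\GG}, \Q)$-local martingale, upgraded to a true martingale by $\E_\Q \vartheta = 1$. The tower property then yields $\E_\Q(\vartheta \mid \F_T) = \E_\Q(\E_\Q(\vartheta \mid \wh{\G}_0) \mid \F_T) = 1$, which is (i).

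For (ii), the plan is to verify the three hypotheses of Theorem \ref{thm:CMC-char} for $X$ under $\P$, so that the $(\FF, \GG)$-CMC property with $\FF$-intensity $\Lambda$ follows. Applying the Girsanov theorem for counting processes in the filtration $\wh{\GG}$, the density $\vartheta$ converts the $\Q$-intensity $H^x_{u-} a^{xy}$ of $H^{xy}$ into $H^x_{u-} a^{xy}(1 + \kappa^{xy}_u) = H^x_{u-} \lambda^{xy}_u$ under $\P$, so that $K^{xy}_t = H^{xy}_t - \int_0^t H^x_u \lambda^{xy}_u\, du$ is a $(\wh{\GG}, \P)$-local martingale. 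Because $K^{xy}$ is $\FF \vee \GG$-adapted and its compensator is $\FF \vee \GG$-predictable, uniqueness of the dual predictable projection reduces it to an $(\FF \vee \GG, \P)$-local martingale, and Theorem \ref{thm:int-comp}(2), in the semimartingale form of Remark \ref{rem:important}, then identifies $\Lambda$ as the $\FF$-intensity of $X$.

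The orthogonality hypothesis \eqref{asm:ort-to-M} is handled via Remark \ref{rem:ort-no-cj}: the jumps of $X$ lie in the jump times of the independent Poisson processes $N^{xy}$, which have diffuse distributions independent of $\F_T$ under $\Q$, so $X$ and any $\FF$-semimartingale share no common jumps $\Q$-a.s., and since $\P \ll \Q$ this persists under $\P$. For the immersion \eqref{asm:immersion}, I would first verify $\Q$-immersion of $\FF$ in $\FF \vee \GG$ by a direct computation based on (I3) and the $\F_0$-measurability of $\mu_x$ in (I2); the transfer to $\P$ then uses that $\P = \Q$ on $\F_T$, so any $(\FF, \P)$-martingale $N$ is also an $(\FF, \Q)$-martingale and hence an $(\FF \vee \GG, \Q)$-martingale, combined with the $\FF \vee \GG$-adaptedness of $\vartheta$ (the drivers $\kappa^{xy}$ are $\FF$-predictable and $H^{xy}$ is $\GG$-adapted) and the no-common-jumps property, via the Bayes formula.

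The initial distribution \eqref{eq:desired} follows from the $\wh{\G}_0$-measurability of $X_0 = \xi$: the Bayes formula together with the $(\wh{\GG}, \Q)$-martingale property of $\vartheta$ give
\[
\P(\xi = x \mid \F_T) = \E_\Q(\vartheta\, \1_{\xi = x} \mid \F_T) = \E_\Q\!\big(\1_{\xi = x}\, \E_\Q(\vartheta \mid \wh{\G}_0) \mid \F_T\big) = \Q(\xi = x \mid \F_T) = \mu_x,
\]
which is $\F_0$-measurable, so the two conditional probabilities in \eqref{eq:desired} coincide. The main technical hurdle I foresee is the double transfer of the local-martingale property --- from the filtration $\wh{\GG}$ down to $\FF \vee \GG$ and from $\Q$ to $\P$ --- which ultimately rests on the $\FF$-measurability of the Girsanov drivers $\kappa^{xy}$ and on the absence of common jumps between $X$ and $\FF$-adapted processes.
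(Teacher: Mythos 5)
Your proposal is correct and follows essentially the same route as the paper's proof: show the density process is a $(\wh{\GG},\Q)$-martingale, use the point-process Girsanov theorem to identify $\Lambda$ as the $\P$-intensity, verify the immersion, orthogonality and initial-condition hypotheses of Theorem \ref{thm:CMC-char}, and obtain both (i) and \eqref{eq:desired} by conditioning through $\wh{\G}_0$ and using $\E_\Q(\vartheta \mid \wh{\G}_0)=1$. The only cosmetic deviations are that the paper passes from $\wh{\GG}$- to $\FF\vee\GG$-local martingales via an explicit reducing sequence of common stopping times and \cite[Theorem 3.7]{FolProt2011} (where you invoke uniqueness of the dual predictable projection), and it proves $\P$-immersion by a direct Bayes computation of $\E_\P(\psi\mid \F_t\vee\G_t)$ for $\psi\in L^\infty(\F_T)$ rather than by transferring martingales from $\Q$ to $\P$.
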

\begin{proof}
In view of Theorem \ref{thm:CMC-char}, in order to prove (ii) it  suffices to prove that:\\
(a) under measure $\P$ process $X$ has an $\FF$-intensity $\Lambda$, \\
(b) $\FF$ is $\P$--immersed in $\FF \vee \GG $,\\
(c) all real valued $(\FF, \P)$-martingales are orthogonal (under $\P$) to martingales $M^{x}$, $x \in S$,\\
(d) \eqref{eq:desired} holds.

\noindent
We will prove these claims in separate steps. In the process, we will also demonstrate (i).

\noindent
\underline{Step 1:} Here we will show that
$\Lambda$ is an $\FF$-intensity of $X$ under $\P$.
Towards this end, we consider a $\wh{\GG}$-adapted process $\eta$ given as
\[
    \eta_t = \prod_{ x,y \in S : x \neq y }\exp \bigg( - \int_0^t H^x_{u-} a^{xy} \kappa^{xy}_u du \bigg)  \prod_{0 < u \leq t} (1 + \kappa^{xy}_u \Delta H^{xy}_u),\quad \tT,
\]
so that
\[
	d \eta_t = \eta_{t-} \left( \sum_{x,y \in S : x \neq y} \kappa^{xy}_t d L^{xy}_t \right), \quad \eta_0 =1,
\]
where $L^{xy}$ is a $(\wh{\GG}, \Q)$-martingale given by \eqref{comp}. Consequently, process $\eta$ is a $(\wh{\GG}, \Q)$-local martingale. Now, note that $\eta_T=\vartheta$, and thus  $\E_\Q \eta_T=1= \eta_0 $. Thus $\eta$ is  $(\wh{\GG}, \Q)$--martingale (on $[0,T]$).

Since $\kappa^{xy}$ is a left-continuous  and $\FF$-adapted process, and since $\FF \subset \wh{\GG}$,  we conclude that $\kappa^{xy}$ is $\wh{\GG}$-predictable.
Thus, by the Girsanov theorem (see Br{\'e}maud \cite[Thm. VI.T3]{Bre1981}), we conclude that the  $(\wh{\GG}, \P)$ compensator of $H^{xy}$ has density
with respect to the Lebesgue  measure given as\footnote{We use the usual convention that $U_{0-}:=0$ for any real valued process $U$.}
\[
\I_\set{x} (X_{t-})a^{xy}(1 + \kappa^{xy}_t)
=
\I_\set{x} (X_{t-})a^{xy}\left(1 + \frac{ \lambda^{xy}_{t-} }{a^{xy}} -1 \right)
=
\I_\set{x} (X_{t-})\lambda^{xy}_{t-}, \quad t \in [0,T].
\]
So, for any $x\ne y$, the process $\wh K^{xy}$ defined as
\[
	\wh K^{xy}_t :=
H^{xy}_t - \int_0^t \I_\set{x} (X_{u-})  \lambda^{xy}_{u-} du,
\]
is a $\wh{\GG}\,$--$\,$local martingale under $\P$.
Since $X$ is a \cadlag process and since $\lambda^{xy}$ satisfies condition (C2) we see that
\begin{equation}\label{emka}
\wh K^{xy}_t  = H^{xy}_t - \int_0^t H^x_{u} \lambda^{xy}_{u} du, \quad t \in [0,T],
\end{equation}
is a $\wh{\GG}\,$--$\,$local martingale under $\P$.
 Note that  $\FF \vee \GG \subset \wh{\GG}$ and that the process $\wh K^{xy}$ is $\FF \vee \GG$-adapted. Taking 
\[
\tau_n := 
\inf { \Big\{ t \geq 0 : H^{xy}_t \geq n \ \textrm{or} \ \int_0^t   \lambda^{xy}_u  du
\geq  n  \Big\} }
\]
gives us a reducing sequence of $\wh{\GG}$ stopping times for $\wh K^{xy}$, which are also $\FF \vee \GG$ stopping times. So, in view of  \cite[Theorem 3.7]{FolProt2011}, we have that
$\wh K^{xy}$ is also  a $\FF \vee \GG\,$--$\,$local martingale.
Thus according to  Remark \ref{rem:important} we can  use Theorem \ref{thm:int-comp} to conclude that $\Lambda$ is an $\FF$-intensity of $X$ under $\P$.
%
%

\underline{Step 2:} 
In this step we prove \eqref{obciecie}.
By definition of $\P$ and by the tower property of conditional expectations we conclude  that for an arbitrary $\psi \in L^\infty(\F_T)	$ we have
\[
\E_\P(\psi)
=
\E_\Q( \psi \eta_T )
=
\E_\Q( \E_\Q( \psi \eta_T | \wh \G_0 ))
=
\E_\Q( \psi \E_\Q( \eta_T | \wh \G_0 ))
=
\E_\Q( \psi).
\]

\underline{Step 3:}
Next, we show that $\FF$ is $\P$-immersed in $\FF \vee \GG$. In view of
Proposition 5.9.1.1 in Jeanablanc, Yor and Chesney \cite{JeaYorChe2009}{ it suffices} to show that for any $\psi \in L^\infty(\F_T)$ and  any $\tT$ it holds that
\be\lab{rhslhs}
\E_\P ( \psi | \F_t \vee \G_t)
=
\E_\P ( \psi | \F_t )
, \quad
\P-a.s.
\ee
Now, observe that
\[\P(\eta_t>0)=\E_\Q(\1_{\set{\eta_t>0}}\eta_T)\geq \E_\Q(\1_{\set{\eta_T>0}}\eta_T)=\E_\Q(\eta_T)=1,\]
so that $\P( \eta_t > 0 ) =1$. {Moreover, $\eta_t$ is $\F_t \vee  \G_t$ measurable by (I3), \eqref{comp} and (C1).} Thus we have
\[
\E_\P ( \psi | \F_t \vee \G_t)
=
\frac{\E_\Q ( \psi \eta_T |\F_t \vee  \G_t)}
{\E_\Q ( \eta_T | \F_t \vee \G_t)}
=
\frac{\E_\Q ( \psi \E_\Q ( \eta_T | \wh  \G_t)| \F_t \vee \G_t)}
{\eta_t}
\]
\[
=
\frac{\E_\Q ( \psi \eta_t | \F_t \vee \G_t)}
{\eta_t}
=
\E_\Q ( \psi | \F_t \vee \G_t)	
=
\E_\Q ( \psi | \F_t ),  \quad
\P-a.s.,	\]
where the third equality holds in view of the fact that $\eta$ is $(\wh{\GG},\Q)$-martingale, and where the last equality holds since $\FF$ is $\Q$-immersed in $\FF \vee \GG$ (see Appendix A, Corollary  \ref{cor:immersion}).
Hence, using \eqref{obciecie} we conclude
\[
\E_\P ( \psi | \F_t \vee \G_t)
=
\E_\P ( \psi | \F_t )
,
\quad
\P-a.s.
\]

Consequently, \eqref{rhslhs} holds.

\underline{Step 4:}
Now we show the required orthogonality, that is we prove claim (c). Towards this end it suffices to prove that all real valued $(\FF,\P)$-martingales do not have common jumps with $X$ under $\P$ (see Remark \ref{rem:ort-no-cj}).
Let us take $Z$ to be an arbitrary real valued $(\FF, \P)$-martingale. Then, in view of \eqref{obciecie}
$Z$ is an $(\FF, \Q)$-martingale. By  (I3),  we have that $(\FF,\Q)$-martingales and Poisson processes in $\cN$ are independent under $\Q$. Thus, by Lemma  \ref{lem:indep-Poisson} in the Appendix A, the $\Q$ probability that process $Z$ has common jumps with any process from family $\cN$ is zero. Consequently, in view of \eqref{eq:SDE},  the $(\FF,\Q)$-martingale $Z$ does not jump together with $X$, $\Q$-a.s.
Therefore, by absolute continuity of $\P$ with respect to $\Q$, $\P$ probability that $Z$ jumps at the same time as $X$ is zero.

%

\underline{Step 5:} Finally, we will show that \eqref{eq:desired} holds.
Towards this end, let us take an arbitrary real valued function $h$ on $S$. The abstract Bayes rule yields
\begin{align*}
	\E_\P( h(X_0) | \F_T  )
&=
\frac{ \E_\Q( h(X_0) \eta_T | \F_T  ) }{ \E_\Q( \eta_T | \F_T  ) }
=
\frac{ \E_\Q( \E_\Q(  h(X_0) \eta_T | \wh \G_0) | \F_T  ) }{ \E_\Q( \E_\Q(  \eta_T |\wh \G_0) | \F_T  ) }
\\
&
=
\E_\Q( h(X_0) \E_\Q(  \eta_T | \wh \G_0) | \F_T  )
=\E_\Q( h(X_0) | \F_T  )
=\E_\Q( h(X_0) | \F_0  ),
\end{align*}
where the last equality follows from the fact that by assumption
\eqref{mux} the initial condition of the process $X$ satisfies
\be\lab{rzucik}
\Q( X_0 = x | \F_T ) = \Q( X_0 = x | \F_0 ),\quad x\in S.
\ee
Consequently,
\begin{align*}
\E_\P( 	h(X_0) | \F_0) &=
\E_\P( 	\E_\P( h(X_0) | \F_T  ) | \F_0) =
\E_\P( 	\E_\Q( h(X_0) | \F_0  ) | \F_0) =
\E_\Q( h(X_0) | \F_0  ) \\
&= 	\E_\P( h(X_0) | \F_T  ).
\end{align*}
This completes the proof of \eqref{eq:desired}, and the proof of the theorem.
\finproof
\end{proof}

\section{$(\FF, \GG)$--CMC vs $(\FF, \GG)$--DSMC}\label{cmcdsmc}

In this section we first re-visit the concept of the  doubly stochastic Markov chain. Then, we 
study relationships between conditional Markov chains and doubly stochastic Markov chains. These
relationships are crucial for the theory of consistency of CMCs and for the theory of CMC copulae, that are put forth in the  companion paper \cite{BieJakNie2014b}.

\subsection{ $(\FF, \GG)$--DSMC}

We start with introducing the concept of  $(\mathbb{F},\mathbb{G})$-doubly stochastic Markov chain ($(\FF, \GG)$--DSMC for brevity), which generalizes the notion of  $\mathbb{F}$-doubly stochastic Markov chain (cf. \cite{JakNie2010}), as well as  the notion of continuous time $\mathbb{G}$-Markov chain.

\begin{definition}\label{def:DSMC}
A  $\mathbb{G}$-adapted c\` adl\` ag  process $X=(X_t)_{t \in [0,T]}$ is called  an $(\mathbb{F},\mathbb{G})$--doubly
stochastic Markov chain with state space $S$  if
for any $0 \leq s \leq t \leq T $ and for
    every $y \in S$ it holds that
 \be\lab{eq:DSMC-def}
 \mathbb{P}( X_t = y \mid \mathcal{F}_T \vee \mathcal{G}_s )
 =    \mathbb{P}( X_t = y \mid \mathcal{F}_t \vee \sigma(X_s) ).
 \ee
\end{definition}

 We refer to  \cite{JakNie2010} for examples of processes, which are $(\FF, \FF^X)$-DSMCs. We remark that in \cite{JakNie2010}  it was assumed that the chain $X$ starts from some point $x\in S$ with probability one, whereas here, we allow for the initial state $X_0$ to  be a non-constant random variable.

With any $X$, which is an $(\FF,\GG)$-DSMC,  we associate a matrix valued random field $ P=( P(s,t),\  0 \leq s \leq t \leq T)$, where $ P(s,t)= ( p_{xy}(s,t))_{x,y \in
S }$   is defined by
\be\label{tp}   p_{x,y}(s,t) =
    \frac{ \mathbb{P}( X_t = y , X_s = x \mid \mathcal{F}_t  ) }{  \mathbb{P}(  X_s = x  | \F_t ) }
\1_\set{\mathbb{P}(  X_s = x  | \F_t ) > 0}
+
\I_\set{ x=y} \1_\set{\mathbb{P}(  X_s = x  | \F_t ) = 0}
.
\ee
The following result provides a characterization of $(\FF,\GG)$-DSMC. \begin{proposition}\label{prop:c-tf}
A process $X$ is an $(\FF,\GG)$-DSMC iff there exists a stochastic matrix valued random field $\wt P(s,t) = ( \wt p_{xy}(s,t))_{x,y \in
S },\ 0 \leq s \leq t \leq T$, such that:
\begin{itemize}
\setlength\itemsep{-0em}
\item[1)] for  every $s\in [0,T]$, the process $\wt P(s,\cdot)$ is $\FF$--adapted on $[s,T]$,
\item[2)]  for any $0 \leq s \leq t \leq T $ and for
    every $x,y \in S$ we have
    \begin{equation}\label{eq:DSMC-def-2}
    \I_\set{ X_s  = x }\mathbb{P}( X_t = y \mid \mathcal{F}_T \vee \mathcal{G}_s )
     = \I_\set{ X_s  = x }
    \wt p_{xy}(s,t)
    \end{equation}
or, equivalently,
for
    every $y \in S$ we have
    \begin{equation}\label{eq:DSMC-def-3}
    \mathbb{P}( X_t = y \mid \mathcal{F}_T \vee \mathcal{G}_s )
     = \sum_{x \in S}\I_\set{ X_s  = x }
    \wt p_{xy}(s,t).
    \end{equation}
\end{itemize}
\end{proposition}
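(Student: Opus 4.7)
The plan is to prove both implications and take $\wt p_{xy}(s,t) := p_{xy}(s,t)$ from \eqref{tp} as the canonical witness for the forward direction. At the outset I would observe that the two formulations in 2) are equivalent: summing \eqref{eq:DSMC-def-2} over $x \in S$ and using $\sum_x \I_{\{X_s = x\}} = 1$ yields \eqref{eq:DSMC-def-3}, while multiplying \eqref{eq:DSMC-def-3} by $\I_{\{X_s = x\}}$ recovers \eqref{eq:DSMC-def-2}. So it suffices to work with whichever form is more convenient.

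The ``if'' direction is quick. Given such $\wt P$, the right-hand side of \eqref{eq:DSMC-def-3} is $\F_t \vee \sigma(X_s)$-measurable by 1) together with the $\sigma(X_s)$-measurability of the indicators, while $\F_t \vee \sigma(X_s) \subseteq \F_T \vee \G_s$. Hence by the tower property
\begin{equation*}
\P(X_t = y \mid \F_t \vee \sigma(X_s)) = \E[\P(X_t = y \mid \F_T \vee \G_s) \mid \F_t \vee \sigma(X_s)] = \sum_{x \in S} \I_{\{X_s=x\}} \wt p_{xy}(s,t),
\end{equation*}
which combined with \eqref{eq:DSMC-def-3} gives the DSMC identity \eqref{eq:DSMC-def}.

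For the converse, I would first check that $\wt p_{xy}(s,t) = p_{xy}(s,t)$ satisfies 1) and is stochastic. $\F_t$-measurability is transparent from \eqref{tp}; nonnegativity is clear; and the row-sum identity follows from $\sum_y \P(X_t = y, X_s = x \mid \F_t) = \P(X_s = x \mid \F_t)$ on the set where the denominator is strictly positive, while on the complement the $\I_{\{x=y\}}$ convention contributes $1$ directly. To verify \eqref{eq:DSMC-def-2}, I would use the DSMC property to replace $\P(X_t = y \mid \F_T \vee \G_s)$ by $\P(X_t = y \mid \F_t \vee \sigma(X_s))$, write the latter in the canonical form $\sum_{x \in S} \I_{\{X_s = x\}} f_x$ with $\F_t$-measurable $f_x$, and identify $f_x$ on $\{X_s = x\}$ by testing against $\I_B$ for $B \in \F_t$: this quickly gives $f_x \P(X_s = x \mid \F_t) = \P(X_s = x, X_t = y \mid \F_t)$ a.s., matching $p_{xy}(s,t) \P(X_s = x \mid \F_t)$ on $\{\P(X_s = x \mid \F_t) > 0\}$.

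The only delicate point — and the expected obstacle — is the behavior on the exceptional set $\{\P(X_s = x \mid \F_t) = 0\}$, where \eqref{tp} replaces the division by the convention $\wt p_{xy}(s,t) = \I_{\{x=y\}}$. This is handled by the elementary observation
\begin{equation*}
\E\big[\I_{\{X_s = x\}} \I_{\{\P(X_s = x \mid \F_t)=0\}}\big] = \E\big[\P(X_s = x \mid \F_t)\, \I_{\{\P(X_s = x \mid \F_t)=0\}}\big] = 0,
\end{equation*}
so $\{X_s = x\} \cap \{\P(X_s = x \mid \F_t) = 0\}$ is $\P$-null; multiplying through by $\I_{\{X_s = x\}}$ therefore erases the ambiguity and \eqref{eq:DSMC-def-2} holds. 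The convention built into \eqref{tp} is there purely to make $\wt P$ a bona fide stochastic matrix everywhere on $\Omega$, not merely almost surely.
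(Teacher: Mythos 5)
Your proof is correct, and its sufficiency half is essentially word-for-word the paper's argument: $\F_t \vee \sigma(X_s)$-measurability of the right-hand side of \eqref{eq:DSMC-def-3}, the inclusion $\F_t \vee \sigma(X_s) \subseteq \F_T \vee \G_s$, and the tower property. Where you diverge is in the necessity half. The paper obtains the representation $\P(X_t = y \mid \F_t \vee \sigma(X_s)) = \sum_{x\in S} \I_\set{X_s=x}\, p_{xy}(s,t)$ (formula \eqref{JNBCJR}) by citing \cite[Lemma 3]{JakNie2007} (see also \cite[Lemma 2.1]{BielCrepJeanRutk2006}) and then concludes via the DSMC property; you instead prove that representation from scratch: since $\sigma(X_s)$ is generated by a finite partition, every $\F_t \vee \sigma(X_s)$-measurable variable has the form $\sum_{x} \I_\set{X_s=x} f_x$ with $f_x$ being $\F_t$-measurable, you identify $f_x\, \P(X_s=x\mid\F_t) = \P(X_t=y, X_s=x \mid \F_t)$ by testing against sets in $\F_t$, and you dispose of the exceptional set $\set{\P(X_s=x\mid\F_t)=0}$ by noting that its intersection with $\set{X_s=x}$ is $\P$-null. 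That null-set argument is precisely the content hidden in the convention built into \eqref{tp}, and your remark that the convention serves only to make $\wt P$ a genuine stochastic matrix everywhere (rather than almost surely) is the right reading of it. What your route buys is self-containedness --- no appeal to an external lemma --- at the cost of a slightly longer argument; you also verify explicitly that $P$ defined by \eqref{tp} has row sums equal to one, a point the paper leaves implicit when it takes $\wt P = P$.
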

\begin{proof}  We first prove the sufficiency.
 Using \eqref{eq:DSMC-def-2} we have that
\be\lab{ten}
  \mathbb{P}( X_t = y \mid \mathcal{F}_T \vee \mathcal{G}_s )
 =
 \sum_{x \in S} \I_\set{X_s = x}  \wt p_{xy}(s,t).
\ee
So, taking conditional expectations with respect to $\mathcal{F}_t \vee \sigma(X_s)$ on both sides of \eqref{ten}, observing that $\mathcal{F}_t \vee \sigma(X_s) \subset \mathcal{F}_T \vee \mathcal{G}_s$, and  using the tower property of conditional expectations,  we obtain
 \[
  \mathbb{P}( X_t = y \mid \mathcal{F}_t \vee \sigma(X_s) )
 =
 \mathbb{E}\bigg(
 \sum_{x \in S} \I_\set{X_s = x}  \wt{p}_{xy}(s,t)  \mid \mathcal{F}_t \vee \sigma(X_s)\bigg)
 =
 \sum_{x \in S} \I_\set{X_s = x}  \wt{p}_{xy}(s,t),
\]
where the last equality follows from measurability of  $\sum_{x \in S} \I_\set{X_s = x}  \wt{p}_{xy}(s,t)$ with respect to $\mathcal{F}_t \vee \sigma(X_s)$.
This and  \eqref{ten} imply
 \[
 \mathbb{P}( X_t = y \mid \mathcal{F}_T \vee \mathcal{G}_s )
 =
 \mathbb{P}( X_t = y \mid \mathcal{F}_t \vee \sigma(X_s) ),
 \]
which is \eqref{eq:DSMC-def}.

 Now we prove the necessity.
First we observe that, using similar arguments as in Jakubowski and Niew\k{e}g\l owski  \cite[Lemma 3]{JakNie2007} (see also Bielecki, Cr\'{e}pey, Jeanblanc and Rutkowski  \cite[Lemma 2.1]{BielCrepJeanRutk2006}), we have that  for $t \geq s$
\begin{align}\label{JNBCJR}
&\mathbb{P}( X_t = y \mid \mathcal{F}_t \vee \sigma(X_s) ) \\
\nonumber
&= \sum_{x \in S }
\I_\set{ X_s  = x }
\left(
\frac{ \mathbb{P}( X_t = y , X_s = x \mid \mathcal{F}_t  ) }{  \mathbb{P}(  X_s = x  | \F_t ) } \1_\set{ \mathbb{P}(  X_s = x  | \F_t ) > 0 }
+ \1_\set{y=x} \I_\set{ \P(X_s = x  | \F_t ) = 0 }
\right) \quad
\mathbb{P}-a.s.
\end{align}
Consequently, in view of \eqref{tp} we have
\[
\mathbb{P}( X_t = y \mid \mathcal{F}_t \vee \sigma(X_s) )
=
\sum_{x \in S }
\I_\set{ X_s  = x } p_{x,y}(s,t).
\]
It is enough now to let $\wt p_{x,y}(s,t)= p_{x,y}(s,t),$ for $x,y \in
S , 0 \leq s \leq t \leq T$.
\finproof
\end{proof}

As we saw in the proof of Proposition \ref{prop:c-tf} we can take $\wt P = P$, where $P$ is given by  \eqref{tp}.
\brem Observe that, in view of  the results in Rao \cite{Rao1972}, we have that for every $s\in [0,T]$ and almost every $\omega \in \Omega$   the function $P(s,\cdot)$ is measurable on $[s,T]$, and that for every $t\in [0,T]$ and almost every $\omega \in \Omega$, the function $P(\cdot,t)$ is measurable on $[0,t]$.
\erem

This, and \eqref{eq:DSMC-def-2} justify the following definition
\begin{definition}
The matrix valued random field $ P=( P(s,t),\  0 \leq s \leq t \leq T)$,
defined by \eqref{tp} is called the conditional transition
probability matrix field (c--transition field for short) of $X$.
\end{definition}

\brem
For the future reference, we note  that \eqref{eq:DSMC-def-3}  in the definition of
an $(\mathbb{F},\mathbb{G})$-DSMC, can be written in the following form (recall that we take $\wt P = P$):
    \[
\mathbb{E} ( H^y_t \mid \mathcal{F}_T \vee \mathcal{G}_s) =
\sum_{x \in S} H^x_s p_{xy}(s,t)\qquad
\text{
for
    every $y \in S$,}
\]
which is equivalent to
\begin{equation}\label{eq:DSMC-vector}
\mathbb{E} \left( H_t \mid \mathcal{F}_T  \vee  \mathcal{G}_s
\right) = P(s,t)^\top H_s .
\end{equation}
\erem
%

We know that in the case of classical Markov chains the transition semigroup and the initial distribution of the chain characterize the finite dimensional distributions of the chain, and thus they characterize the  law of the chain. The next proposition shows that, in case of an $(\mathbb{F},\mathbb{G})$--DSMC $X$,  the c-transition field $P$ of $X$ and the conditional law of $X_0$ given $\mathcal{F}_T$ characterize conditional law of $X$ given $\mathcal{F}_T$.
\begin{proposition} \label{wn2.4}
If $X$ is an $(\mathbb{F},\mathbb{G})$--DSMC with c-transition field $P,$ then for arbitrary $0 =t_0 \leq t_1 \leq \ldots \leq t_n \leq t  \leq T$ and $(x_1, \ldots,
x_n ) \in S^n$ it holds that
\begin{equation}\label{eq:c-fidis}
\mathbb{P}( X_{t_1} = x_1, \ldots X_{t_n} = x_n \mid
\mathcal{F}_T ) =\sum_{x_0 \in S} \mathbb{P}( X_{0} = x_0 | \F_T)
\prod_{k=0}^{n-1}
p_{x_{k} , x_{k+1}} (t_{k}, t_{k+1}).
\end{equation}
Moreover, if
\begin{equation}\label{eq:c-init-dist}
\mathbb{P}( X_{0} = x_0 | \F_T) =\mathbb{P}( X_{0} = x_0 | \F_0) \quad
\text{for every $x_0 \in S$,}
\end{equation}
then for arbitrary $0 \leq t_1 \leq \ldots \leq t_n \leq t  \leq T$ and $(x_1, \ldots,
x_n ) \in S^n$
\begin{equation}\label{eq:cond-ind}
\begin{aligned}[c]
 \mathbb{P}( X_{t_1} = x_1, \ldots X_{t_n} = x_n \mid
\mathcal{F}_T ) = \mathbb{P}( X_{t_1} = x_1, \ldots X_{t_n} =
x_n \mid \mathcal{F}_t ).
\end{aligned}\end{equation}
\end{proposition}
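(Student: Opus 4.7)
My plan is to establish \eqref{eq:c-fidis} by backwards-in-time induction on $n$, peeling off the last time index using the $(\FF,\GG)$-DSMC property in the form \eqref{eq:DSMC-def-3} of Proposition \ref{prop:c-tf}, and then to deduce \eqref{eq:cond-ind} as an immediate measurability consequence.

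For \eqref{eq:c-fidis}, the key observation is that for $t_{n-1}\le t_n\le T$, the c-transition entry $p_{x_{n-1},x_n}(t_{n-1},t_n)$ is $\F_{t_n}$-measurable (hence $\F_T$-measurable), and that $\F_T\vee\G_{t_{n-1}}$ contains both $\F_T$ and $\sigma(X_{t_1},\ldots,X_{t_{n-1}})$. Using the tower property,
\[
\P(X_{t_1}=x_1,\ldots,X_{t_n}=x_n\mid \F_T)
=\E\!\left(\I_{\{X_{t_1}=x_1\}}\cdots\I_{\{X_{t_{n-1}}=x_{n-1}\}}\,\P(X_{t_n}=x_n\mid\F_T\vee\G_{t_{n-1}})\,\Big|\,\F_T\right),
\]
and Proposition \ref{prop:c-tf} (taking $\wt P=P$) gives $\P(X_{t_n}=x_n\mid\F_T\vee\G_{t_{n-1}})=\sum_{y\in S}\I_{\{X_{t_{n-1}}=y\}}\,p_{y,x_n}(t_{n-1},t_n)$. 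Multiplying by $\I_{\{X_{t_{n-1}}=x_{n-1}\}}$ collapses the sum to the single term $p_{x_{n-1},x_n}(t_{n-1},t_n)$, which can be pulled out of the conditional expectation by $\F_T$-measurability. This reduces the $n$-point probability to $p_{x_{n-1},x_n}(t_{n-1},t_n)$ times the $(n{-}1)$-point probability, and the induction hypothesis delivers \eqref{eq:c-fidis}. The base case $n=1$ is handled analogously by conditioning on $\F_T\vee\G_0$ and using \eqref{eq:DSMC-def-3} at $s=0$.

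For \eqref{eq:cond-ind}, assume \eqref{eq:c-init-dist}. Since each factor $p_{x_k,x_{k+1}}(t_k,t_{k+1})$ is $\F_{t_{k+1}}$-measurable and $t_{k+1}\le t$, while $\P(X_0=x_0\mid\F_T)=\P(X_0=x_0\mid\F_0)$ is $\F_0\subset\F_t$-measurable, the right-hand side of \eqref{eq:c-fidis} is $\F_t$-measurable. Hence $\P(X_{t_1}=x_1,\ldots,X_{t_n}=x_n\mid\F_T)$ is $\F_t$-measurable, and applying $\E(\,\cdot\mid\F_t)$ together with the tower property (recall $\F_t\subset\F_T$) gives \eqref{eq:cond-ind}.

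The only nontrivial step is the careful setup of the induction: one must ensure the correct filtration is used at each peeling step, and exploit the $\F_T$-measurability of the c-transition entries to move them outside the conditional expectations. Everything else is routine manipulation of conditional expectations.
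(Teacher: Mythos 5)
Your proof is correct, and its overall skeleton matches the paper's: first establish the product formula \eqref{eq:c-fidis} for the $\F_T$-conditional finite-dimensional distributions, then observe that under \eqref{eq:c-init-dist} the right-hand side is $\F_t$-measurable and conclude \eqref{eq:cond-ind} by the tower property (your second step is essentially verbatim the paper's). The difference lies in how the product formula is obtained. The paper does not prove it by hand: it invokes Lemma 3.1 of Jakubowski and Niew\k{e}g\l owski (2010), which gives the stronger, $\F_T\vee\G_0$-conditional statement
\[
\P\bigl( X_{t_1}=x_1,\ldots,X_{t_n}=x_n \mid \F_T\vee\G_0\bigr)\,\I_{\{X_0=x_0\}}
=\I_{\{X_0=x_0\}}\prod_{k=0}^{n-1} p_{x_k,x_{k+1}}(t_k,t_{k+1}),
\]
and then conditions down to $\F_T$ and sums over $x_0$. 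You instead derive the $\F_T$-conditional formula directly, by backward induction on $n$: conditioning on $\F_T\vee\G_{t_{n-1}}$, applying \eqref{eq:DSMC-def-3} of Proposition \ref{prop:c-tf} with $\wt P = P$, collapsing the sum against $\I_{\{X_{t_{n-1}}=x_{n-1}\}}$, and pulling the $\F_{t_n}$-measurable (hence $\F_T$-measurable) factor $p_{x_{n-1},x_n}(t_{n-1},t_n)$ out of the conditional expectation. This is exactly the mechanism hidden inside the cited lemma, so your argument is not conceptually different, but it has the merit of being self-contained within the present paper's toolkit, whereas the paper's version is shorter at the price of outsourcing the key step to an external reference. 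All measurability claims you rely on (that $p_{x,y}(s,t)$ is $\F_t$-measurable, and that $\sigma(X_{t_1},\ldots,X_{t_{n-1}})\subset\G_{t_{n-1}}$ by $\GG$-adaptedness) are justified by \eqref{tp} and Definition \ref{def:DSMC}, so there is no gap.
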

\begin{proof}
Let us fix arbitrary $x_1, \ldots, x_n \in S$ and  $0  \leq t_1 \leq \ldots \leq t_n \leq t  \leq T$, and let us define the set $A$ by
\[
A = \set{ X_{t_1} =x_1, \ldots, X_{t_n} = x_n  }.
\]
Note that by Lemma 3.1  in \cite{JakNie2010}  we have
\begin{equation*}
\P( A | \F_T \vee \G_0)
\I_\set{X_0 = x_0}
=\I_\set{ X_{0} = x_0 }
\prod_{k=0}^{n-1} p_{x_{k} , x_{k+1}} (t_{k}, t_{k+1}).
\end{equation*}
Consequently,
\begin{align*}
\mathbb{P}( A \mid
\mathcal{F}_T ) &=\sum_{x_0 \in S} \mathbb{P}( X_{0} = x_0 | \F_T)
\prod_{k=0}^{n-1}
p_{x_{k} , x_{k+1}} (t_{k}, t_{k+1})
\end{align*}
which proves \eqref{eq:c-fidis}. Thus, in view of \eqref{eq:c-init-dist}, the following equality is satisfied
\begin{align*}
\P(A| \mathcal{F}_T)
&=\sum_{x_0 \in S} \mathbb{P}( X_{0} = x_0 | \F_0)  \prod_{k=0}^{n-1}
p_{x_{k} , x_{k+1}} (t_{k}, t_{k+1}).
\end{align*}
Since $P$ is a c-transition field we obtain that $\P(A| \mathcal{F}_T)$ is $\mathcal{F}_t$ measurable as a product of $\mathcal{F}_t$-measurable random variables. Thus, the tower property of conditional expectations yields
\eqref{eq:cond-ind}.
\finproof
\end{proof}
\begin{corollary}\label{cor:hihihaha}
Let $X$ be an $(\FF, \GG)$-DSMC with $X_0$ satisfying \eqref{eq:c-init-dist}.
Then $\FF$ is  $\P$-immersed in $\FF \vee \FF^X$.
\end{corollary}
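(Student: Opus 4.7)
The plan is to verify the conditional-expectation characterization of immersion already invoked in Step 3 of the proof of Theorem \ref{thm:CMC-under-constr}: by Proposition 5.9.1.1 in \cite{JeaYorChe2009}, $\FF$ is $\P$-immersed in $\FF \vee \FF^X$ iff for every $t \in [0,T]$ and every $\psi \in L^\infty(\F_T)$,
\[
\E_\P(\psi \mid \F_t \vee \F^X_t) = \E_\P(\psi \mid \F_t), \quad \P\text{-a.s.}
\]
This is in turn equivalent to the conditional independence $\F_T \perp \F^X_t \mid \F_t$ for every $t$, so I will aim to establish the latter.

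The first step is to use Proposition \ref{wn2.4}. Since the initial-distribution hypothesis \eqref{eq:c-init-dist} is in force by assumption, formula \eqref{eq:cond-ind} yields
\[
\P(X_{t_1}=x_1,\ldots,X_{t_n}=x_n \mid \F_T) = \P(X_{t_1}=x_1,\ldots,X_{t_n}=x_n \mid \F_t)
\]
for every $0 \leq t_1 \leq \cdots \leq t_n \leq t$ and every $(x_1,\ldots,x_n)\in S^n$. In other words, $\P(A \mid \F_T) = \P(A \mid \F_t)$ for every $A$ in the $\pi$-system $\mathcal{C}_t$ of finite-dimensional cylinders of $X$ indexed by times in $[0,t]$.

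The second step is a monotone-class extension: the family of sets $A$ satisfying $\P(A\mid \F_T) = \P(A\mid \F_t)$ is a $\lambda$-system containing $\mathcal{C}_t$. Because $X$ is c\`adl\`ag, $\mathcal{C}_t$ generates the (uncompleted, un-right-continuized) natural $\sigma$-field of $X$ up to $t$, and by a standard approximation (taking $\F^X_{t+}$-measurable sets as limits along a countable dense sequence) we extend the equality to the completed, right-continuous $\F^X_t$. Hence $\P(A\mid \F_T) = \P(A\mid \F_t)$ for every $A\in \F^X_t$, and by a further application of the monotone-class theorem,
\[
\E_\P(Y\mid \F_T) = \E_\P(Y\mid \F_t)
\]
for every bounded $\F^X_t$-measurable $Y$. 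This is exactly the conditional independence $\F_T \perp \F^X_t \mid \F_t$, which via the characterization recalled above gives the desired immersion.

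The main obstacle is the careful monotone-class extension to the completed, right-continuous $\F^X_t$, since the elementary identity from \eqref{eq:cond-ind} only gives cylinder events built from deterministic times in $[0,t]$; after that, everything reduces to standard conditional-independence bookkeeping.
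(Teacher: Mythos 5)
Your proof is correct and takes essentially the same route as the paper: the paper's own (two-line) proof likewise invokes Proposition \ref{wn2.4} to obtain \eqref{eq:cond-ind} and then concludes the immersion of $\FF$ in $\FF \vee \FF^X$, citing \cite[Lemma 2]{JakNie2007} for the equivalence of \eqref{eq:cond-ind} with that immersion. The only difference is that you prove this equivalence from scratch (via the conditional-independence characterization of immersion and a monotone-class/right-continuity extension to $\F^X_t$, which is legitimate since $\FF$ satisfies the usual conditions, so backward martingale convergence along $s \downarrow t$ applies), i.e., you reprove the content of the cited lemma rather than quoting it.
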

\begin{proof}
In view of Proposition \ref{wn2.4}  process $X$ satisfies \eqref{eq:cond-ind}. This, by  \cite[Lemma 2]{JakNie2007}, is  equivalent to $\P$-immersion of $\mathbb{F}$ in $\mathbb{F} \vee \mathbb{F}^X$.
\finproof
\end{proof}
In analogy to the concept of $\FF$-intensity for $(\mathbb{F},\mathbb{G})$-CMCs, one considers the concept of intensity with regard to $(\mathbb{F},\mathbb{G})$-DSMCs.
 Definition \ref{defdef} introduces a concept of such intensity. This definition is stated in the form, which is consistent with the way the original definition of intensity for DSMCs {was} introduced in \cite{JakNie2010}.
Later on, we will show that this definition can be equivalently stated in the form analogous to Definition \ref{def:F-intensity}.
\begin{definition}\label{defdef}
We say that  {an} $\mathbb{F}$--adapted matrix-valued
process $\Gamma = (\Gamma_s)_{s \geq 0} = ([\gamma^{xy}_s]_{x,y \in S})_{s
\geq 0}$ is an intensity of  $(\mathbb{F},\mathbb{G})$-DSMC $X$
if the following conditions are satisfied: \\
1)
\begin{equation}  \label{nr2/8}
\int_{]0,T]} \sum_{x \in S} \abs{\gamma^{xx}_s } ds <
\infty  .\ \ \ \ \ \
\end{equation}
 2)
  \begin{equation}\label{eq:intensity-cond}
   \gamma^{xy}_s \geq 0 \ \ \
    \forall x, y\in S, x\neq y
    ,
    \ \ \ \
    \gamma^{xx}_s = - \sum_{y \in S: y \neq x} \gamma^{xy}_s
    \ \ \ \
    \forall x \in  S.
\end{equation}
3)
  The Kolmogorov backward equation holds: for all  $v\leq t$,
\begin{equation}\label{eq:INT-trans-prob-backward}
 P(v,t) - \mathrm{I}     =   \int_v^t  \Gamma_u P(u,t) du.
\end{equation}
4)  The Kolmogorov forward equation holds: for all  $v\leq t$,
\begin{equation}\label{eq:INT-trans-prob-forward}
 P(v,t)  - \mathrm{I} = \int_v^t P(v,u) \Gamma_u du.
\end{equation}
\end{definition}
\begin{remark}\label{rem:Kol-Eq}
The above Kolmogorov equations admit unique solution provided that $\Gamma$ satisfies \eqref{nr2/8}.
 The unique solution of Kolmogorov equation
\eqref{eq:INT-trans-prob-backward} is given by the formula which is known as Peano-Baker series
\begin{equation*}
P(v,t) = \mathrm{I} + \sum_{n=1}^\infty \int_{v}^{t}
\int_{v_1}^{t} \ldots \int_{v_{n-1}}^{t} \Gamma_{v_1}\ldots
\Gamma_{v_n} d v_n \ldots d v_1,
\end{equation*}
and the solution of \eqref{eq:INT-trans-prob-forward} is given by
\[
P(v,t) = \mathrm{I} + \sum_{n=1}^\infty \int^{t}_{v}
\int^{v_1}_{v} \ldots \int^{v_{n-1}} _{v} \Gamma_{v_n}\ldots
\Gamma_{v_1} d v_n \ldots d v_1.
\]
There is also a different useful representation of  the solution of Kolmogorov equations.  It is given in terms of a matrix exponential, and it is called the Magnus expansion:
\[
P(v,t) = \exp(\Phi(v,t)),
\]
where $\Phi(v,t)$ is the Magnus series
\[
\Phi(v,t) = \sum_{k=1}^\infty \Phi_k(v,t).
\]
We refer to Blanes, Casas, Oteo and Ros  \cite{BlaCasOteRos2009} for a detailed statement of the Magnus expansion in deterministic case. The formulae found in \cite{BlaCasOteRos2009} are adequate in our case, as here we use the Magnus expansion of $P(v,t)$ for every $\omega \in \Omega$.

It is easily seen from the Magnus expansion, that $P(v,t)$ has inverse $Q(v,t)=\exp(-\Phi(v,t))$. {For an alternative proof of invertibility of $P(v,t)$ we refer to
\cite[Proposition 3.11.iii)]{JakNie2010}}.

\end{remark}

\subsubsection{Martingale characterizations of $(\mathbb{F},\mathbb{G})$-DSMC}

It turns out that  the $(\mathbb{F},\mathbb{G})$-DSMC property of process $X$ is fully characterized by the martingale property (\wrt  the filtration $\mathbb{\widehat{G}}$ given by \eqref{eq:GGhat-0}) of
some processes related to $X$. These characterizations are given in the next theorem.

\begin{theorem}\label{thm:characterization-DSMC}
Let $(X_t)_{t \in [0,T] }$ be an $S$--valued stochastic
process and $(\Gamma_t)_{t \in [0,T] }$ be an $\FF$-adapted matrix valued process
satisfying \eqref{nr2/8} and \eqref{eq:intensity-cond}. The following conditions are equivalent:
\\
i) The process $X$ is an $(\mathbb{F},\mathbb{G})$-DSMC with the
intensity process $\Gamma$.
\\
ii) The processes $\wh{M}^x$ defined by
\begin{equation}  \label{nr1/8}
\wh{M}^x_t := H^{x}_t - \int_{]0,t]} \gamma^{X_u,x}_u du , \ \quad x \in S,
\end{equation}
 are $\mathbb{\widehat{G}}\,$--$\,$local martingales.
\\
iii) The processes $K^{xy}$ defined by
\begin{equation}  \label{nr1/9}
    K^{xy}_t := H^{xy}_t - \int_{]0,t]} H^x_s \gamma^{xy}_s
    ds, \quad x,y \in S, \ x \neq y ,
\end{equation}
  where
\begin{equation}\label{eq:def-Hij}
    H^{xy}_t := \int_{]0,t]} H^x_{u-} d H^y_u,
\end{equation}
are $\mathbb{\widehat{G}}\,$--$\,$local martingales.
\\
iv) The process  $L$ defined by
\begin{equation}  \label{eq:def-l}
    L_t := Z^\top_t H_t,
\end{equation}
where $Z$ is a unique solution to the random integral equation
\begin{equation}\label{eq:def-Q0}
    d Z_t = - \Gamma_t Z_t dt, \ \ \ Z_0 = \mathrm{I} ,
\end{equation}
is a $\mathbb{\widehat{G}}\,$--$\,$local martingale.
\\
v) For any $t \in [0,T]$, the process $N^t$ defined as
\begin{equation}\label{eq:N-mart}
N^t_s := P(s,t)^\top H_s \ \ \ \ \text{ for } 0 \leq s \leq t .
\end{equation}
is  a $\widehat{\mathbb{G}}$ martingale,
{ where
\[
P(s,t) :=  Z_s Y_t
\]
with
\begin{equation*}
d Y_t  = Y_t \Gamma_t  dt, \ \ \ \ \ Y_0 = \mathrm{I}, \quad \tT,
\end{equation*}
}
\end{theorem}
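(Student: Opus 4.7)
The plan is to close the cycle by proving ii) $\Leftrightarrow$ iii) $\Leftrightarrow$ iv) $\Leftrightarrow$ v) $\Leftrightarrow$ i). The algebraic backbone I would keep in mind throughout is the identity $dH_t = \Gamma_t^\top H_t\, dt + d\wh M_t$, which is just a vector-matrix rewriting of \eqref{nr1/8} together with $\gamma^{X_u,x}_u = \sum_y H^y_u \gamma^{yx}_u$, and the fact that $Z$ and $Y$ are continuous solutions of linear ODEs, so in particular $Z_t Y_t \equiv \mathrm{I}$ by uniqueness applied to the ODE $d(ZY) = (-\Gamma ZY + ZY\Gamma)\,dt$ with $Z_0Y_0 = \mathrm{I}$. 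Consequently $Z_t$ and $Y_t$ are $\FF$-adapted, continuous, of finite variation, and invertible with $Z_t^{-1} = Y_t$.

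For ii) $\Leftrightarrow$ iii), I would use the splitting $\wh{M}^x = \sum_{y\neq x}(K^{yx} - K^{xy})$, obtained from $H^x_t - H^x_0 = \sum_{y\neq x}(H^{yx}_t - H^{xy}_t)$ and $\gamma^{xx}_u = -\sum_{y\neq x}\gamma^{xy}_u$; this gives iii) $\Rightarrow$ ii). Conversely, since $H^x$ is bounded and $\wh\GG$-predictable in its left-continuous version, the identity $K^{xy}_t = \int_0^t H^x_{u-}\,d\wh{M}^y_u$ (verified by checking that the Lebesgue integrals $\int_0^t H^x_{u-}\gamma^{X_u,y}_u du$ and $\int_0^t H^x_u \gamma^{xy}_u du$ coincide, since $H^x_u = 1$ forces $X_u = x$) yields iii) from ii). For iii)/ii) $\Leftrightarrow$ iv), I would apply Itô to $L_t = Z_t^\top H_t$: since $Z$ is continuous of finite variation the bracket $[Z^\top,H]$ vanishes, and
\[
dL_t = -Z_t^\top \Gamma_t^\top H_t\,dt + Z_t^\top dH_t = Z_{t-}^\top\,d\wh{M}_t,
\]
after the $dt$ terms cancel. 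Local boundedness of $Z_{t-}$ and invertibility of $Z_t$ give the equivalence in both directions.

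For iv) $\Leftrightarrow$ v), observe that for fixed $t$ one has $N^t_s = (Z_s Y_t)^\top H_s = Y_t^\top L_s$ for $s\le t$, and $Y_t^\top$ is $\F_t \subset \wh\G_0$-measurable, hence a $\wh\GG$-measurable constant (in $s$). So $(N^t_s)_{s\le t}$ is a $\wh\GG$-(local) martingale iff $(L_s)_{s\le t}$ is, and since the entries of $L$ are bounded, the local martingale property upgrades to the martingale property. Finally, for v) $\Leftrightarrow$ i) I would directly read off the martingale relation $\E(N^t_t | \wh\G_s) = N^t_s$ as $\E(H_t | \F_T\vee\G_s) = (Z_sY_t)^\top H_s$, which is \eqref{eq:DSMC-vector} with the candidate transition field $\widetilde P(s,t) = Z_s Y_t$. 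A direct differentiation shows $\widetilde P$ satisfies both Kolmogorov equations \eqref{eq:INT-trans-prob-backward}--\eqref{eq:INT-trans-prob-forward}; conversely, if $X$ is an $(\FF,\GG)$-DSMC with intensity $\Gamma$, its c-transition field also satisfies these equations, so uniqueness (Remark \ref{rem:Kol-Eq}) identifies $P(s,t) = Z_s Y_t$ and yields v).

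The main obstacle I anticipate is bookkeeping around the equivalence iv) $\Leftrightarrow$ v): one must be careful that the martingale property of $N^t$ is being asserted on the full interval $[0,t]$ simultaneously for every $t$, and one needs $Y_t = Z_t^{-1}$ to pass from the conditional-expectation relation back to the local martingale property of $L$. Everything else is either linear algebra, uniqueness of ODEs, or stochastic integration against a local martingale with a locally bounded predictable integrand — routine once the identity $dH_t = \Gamma_t^\top H_t\,dt + d\wh M_t$ is in hand.
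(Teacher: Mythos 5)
Your overall architecture is sound, and it is in fact far more detailed than the paper's own proof: for the equivalence of (i)--(iv) the paper simply refers to the proof of Theorem 4.1 in \cite{JakNie2010} and omits all details, while for (iv) $\Leftrightarrow$ (v) it gives precisely your skeleton, namely the identity $N^t_s = Y_t^\top L_s$ for $s \le t$ together with the fact that $Y_t$ is a bounded, invertible, $\wh{\G}_0$-measurable matrix (Lemma \ref{eq:YZY-are-bounded}). Your steps ii) $\Leftrightarrow$ iii) (via $\wh{M}^x_t = H^x_0 + \sum_{y\neq x}(K^{yx}_t - K^{xy}_t)$ and $K^{xy}_t = \int_0^t H^x_{u-}\,d\wh{M}^y_u$), ii) $\Leftrightarrow$ iv) (via $dL_t = Z_t^\top d\wh{M}_t$, with $Z$ and $Y = Z^{-1}$ continuous, adapted, hence locally bounded and predictable), and v) $\Leftrightarrow$ i) (via Proposition \ref{prop:c-tf}, verification of the Kolmogorov equations for $Z_sY_t$, and uniqueness from Remark \ref{rem:Kol-Eq}) are all workable, modulo the identification of the transition field on the events $\set{\P(X_s = x \mid \F_t) = 0}$ --- an imprecision the paper itself shares, since \eqref{tp} forces the identity row there.

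There is, however, one genuine error, and it sits exactly at the point of (iv) $\Rightarrow$ (v) where something must actually be proved: your claim that ``the entries of $L$ are bounded,'' used to upgrade the local martingale property to the true martingale property. This is false in general. Indeed, $L^x_s = Z^{X_s,x}_s$ is an entry of $Z_s = Y_s^{-1}$, and while $Y_s$ is a stochastic matrix with entries in $[0,1]$, its inverse is not: already for a two-state chain with constant generator the entries of $Z_s$ are of size $e^{cs}$ and the off-diagonal ones are negative. Under the standing hypothesis \eqref{nr2/8} alone (where $\Gamma$ is random and only a.s.\ integrable in time), the natural Gronwall bound $\sup_{s\le T}\abs{Z^{xy}_s} \le \exp\big(\int_0^T \sum_{x\in S}\abs{\gamma^{xx}_u}\,du\big)$ is an a.s.\ finite \emph{random} variable, not a constant, and a local martingale that is merely pathwise bounded need not be a martingale (one needs uniform boundedness or domination by an integrable variable). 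The fix is one line and is exactly what the paper's citation of Lemma \ref{eq:YZY-are-bounded} supplies: it is $N^t$, not $L$, that is uniformly bounded, since $N^t_s = P(s,t)^\top H_s$ with $P(s,t)=Z_sY_t$, whose entries lie in $[0,1]$ for $s \le t$ by that lemma; hence $N^t$ is a bounded $\wh{\GG}$-local martingale on $[0,t]$ and therefore a $\wh{\GG}$-martingale. With this single correction (and the same lemma also legitimizing the localization $\tau_n = T\,\I_{\set{\norm{Z_T}\le n}}$ in the converse direction v) $\Rightarrow$ iv)), your proof is complete.
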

\begin{proof}
The proof of equivalence of (i)--(iv) goes along the lines of the proof of \cite[Theorem 4.1]{JakNie2010}; only minor and straightforward modifications are needed,  and therefore the proof is omitted.
{
Equivalence of (iv) and (v) follows from formula
\[
	N^{t}_s = Y_t^\top L_s 
 \ \ \ \ \text{ for } 0 \leq s \leq t
\]
and the fact that $Y_t$ is uniformly bounded $\wh{\cG}_0$ measurable invertible matrix (Lemma  \ref{eq:YZY-are-bounded}).}
\finproof
 \end{proof}


The following result is direct counterpart of Proposition \ref{rem:int-wersje} and therefore we omit its proof.
\begin{proposition}\label{rem:int-wersje-2}
Let $X$ be an $(\FF,\GG)$-DSMC.
\begin{itemize}
\item[i)] If $\Gamma$ and $\wh \Gamma$ are intensities of $X$, then they are equivalent relative to $X$. In particular intensity of $X$ is unique up to equivalence relative to $X$.

\item[ii)] Let $\Gamma$  be an intensity of $X$. If $\wh \Gamma$ is an $\FF$-adapted process equivalent to $\Gamma$ relative to $X$, then $\wh \Gamma$  is intensity of $X$.
\end{itemize}
\end{proposition}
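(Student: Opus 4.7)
My plan is to mirror the argument used for Proposition \ref{rem:int-wersje}, but substituting the martingale characterization of intensity for a DSMC (Theorem \ref{thm:characterization-DSMC} (iii)) in place of Definition \ref{def:F-intensity}. The key observation is that, just as in the CMC case, the difference of the two candidate compensators will be a continuous finite variation local martingale starting at $0$, hence identically zero; translating this back to the vector form $\int_0^t(\Gamma_u-\wh\Gamma_u)^\top H_u\,du$ only requires summing over states.

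For part (i), suppose both $\Gamma$ and $\wh\Gamma$ are intensities. By Theorem \ref{thm:characterization-DSMC} (iii), for every $x\neq y$ the processes
\[
K^{xy}_t=H^{xy}_t-\int_0^t H^x_u\gamma^{xy}_u\,du,\qquad \wh K^{xy}_t=H^{xy}_t-\int_0^t H^x_u\wh\gamma^{xy}_u\,du
\]
are $\wh\GG$-local martingales. Their difference $\wh K^{xy}-K^{xy}=\int_0^\cdot H^x_u(\gamma^{xy}_u-\wh\gamma^{xy}_u)\,du$ is a continuous finite variation local martingale starting at $0$, hence is the null process. Using $\gamma^{xx}=-\sum_{y\neq x}\gamma^{xy}$ and the analogous relation for $\wh\gamma^{xx}$, the same identity follows for $x=y$. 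Summing over $x\in S$ gives $\int_0^t\sum_x H^x_u(\gamma^{xy}_u-\wh\gamma^{xy}_u)\,du=0$ for each $y$, which is precisely the $y$-th coordinate of the vector identity $\int_0^t(\Gamma_u-\wh\Gamma_u)^\top H_u\,du=0$ defining equivalence relative to $X$.

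For part (ii), suppose $\Gamma$ is an intensity and $\wh\Gamma$ is an $\FF$-adapted process equivalent to $\Gamma$ relative to $X$. Examining the $y$-th coordinate of $\int_0^t(\Gamma_u-\wh\Gamma_u)^\top H_u\,du=0$ and using $\sum_x H^x_u=1$ with exactly one indicator equal to $1$ at each time, one gets $\int_0^t(\gamma^{X_u,y}_u-\wh\gamma^{X_u,y}_u)\,du=0$ for all $t$ and $y$, hence $\gamma^{X_u,y}_u=\wh\gamma^{X_u,y}_u$ for Lebesgue-almost every $u$. Consequently, for each $x\in S$, $H^x_u\gamma^{xy}_u=H^x_u\wh\gamma^{xy}_u$ Lebesgue-almost everywhere, so $\wh K^{xy}=K^{xy}$ for all $x\neq y$. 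Since the right-hand side is a $\wh\GG$-local martingale by the hypothesis on $\Gamma$, applying Theorem \ref{thm:characterization-DSMC} (iii) in the converse direction shows that $\wh\Gamma$ is an intensity of $X$.

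The only mildly subtle point, which I do not expect to be a real obstacle, is the reduction from the aggregate identity (summed over $x$) to the pointwise-in-$x$ identity $H^x_u\gamma^{xy}_u=H^x_u\wh\gamma^{xy}_u$; this is handled by the trivial observation that at each $u$ only one indicator $H^x_u$ is non-zero, so matching on the state actually occupied by $X$ suffices to match each individual term $H^x_u\gamma^{xy}_u$. Everything else is a direct transcription of the CMC proof with $\FF\vee\GG$ replaced by $\wh\GG$.
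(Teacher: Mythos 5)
Your proof is correct and is essentially the argument the paper intends when it omits this proof as a ``direct counterpart'' of Proposition \ref{rem:int-wersje}: replace Definition \ref{def:F-intensity} by the martingale characterization of DSMC intensity (Theorem \ref{thm:characterization-DSMC}) and reuse the fact that a continuous finite-variation local martingale starting at $0$ is null. The only difference is your choice of characterization (iii) rather than (ii); working with (ii), i.e.\ with the processes $\wh{M}^x$ of \eqref{nr1/8}, both parts become immediate---the $x$-th coordinate of $\int_0^t(\Gamma_u-\wh\Gamma_u)^\top H_u\,du$ equals $\int_0^t(\gamma^{X_u,x}_u-\wh\gamma^{X_u,x}_u)\,du$, the difference of the two compensators---so you could skip the diagonal-entry bookkeeping in part (i) and the Lebesgue-a.e.\ splitting in part (ii).
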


We will not discuss here the question of existence of an $(\FF, \GG)$-DSMC with intensity $(\Gamma_t)_{t \in [0,T] }$. This question will be addressed in some generality in Bielecki, Jakubowski and Niew\k{e}g{\l}owski \cite{BJN-buczek}. Instead, in the next section, we will show that any $(\FF, \GG)$-CMC process $X$ constructed in Theorem \ref{thm:CMC-under-constr} is also an $(\FF, \GG)$-DSMC.

Since an $(\FF,\GG)$-DSMC $X$ is a $S$-valued \cadlag process, then  it is a pure jump semimartingale.
This observation sheds a new light on the intensity of $X$ as the following corollary shows.

\begin{corollary}\label{cor:akuku}
Intensity of an $(\FF,\GG)$-DSMC $X$ is an $\FF$-intensity of $X$ in the sense of Definition \ref{def:F-intensity}.
\end{corollary}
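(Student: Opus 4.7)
My plan is to derive the corollary directly from the martingale characterization in Theorem \ref{thm:characterization-DSMC}(ii). Let $\Gamma$ be an intensity of the $(\FF,\GG)$-DSMC $X$. Then for each $x\in S$, the process
\[
\wh M^x_t \;=\; H^x_t - \int_0^t \gamma^{X_u,x}_u\,du
\]
is a $\wh{\GG}$-local martingale. My first move is a simple algebraic identification: since exactly one of the indicators $H^y_u$ is $1$, we have $\gamma^{X_u,x}_u=\sum_{y\in S}H^y_u\gamma^{yx}_u=(\Gamma_u^\top H_u)^x$. Hence the vector process $M_t=H_t-\int_0^t\Gamma_u^\top H_u\,du$ coincides componentwise with $(\wh M^x)_{x\in S}$ and is thus a $\r^d$-valued $\wh{\GG}$-local martingale.

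Next I would observe that $M$ is $\FF\vee\GG$-adapted, since $H$ is $\GG$-adapted (as $X$ is $\GG$-adapted) and $\Gamma$ is $\FF$-adapted. So the only remaining point is to reduce the filtration from $\wh{\GG}$ down to $\FF\vee\GG$, exactly as is done in Step 1 of the proof of Theorem \ref{thm:CMC-under-constr}. I would define the localizing sequence
\[
\tau_n \;=\; \inf\Bigl\{t\in[0,T]\,:\,\int_0^t \sum_{x\in S}|\gamma^{xx}_u|\,du\ \ge\ n\Bigr\}\wedge T,
\]
which, by condition \eqref{nr2/8} in Definition \ref{defdef}, increases to $T$ and consists of $\FF$-stopping times (so in particular $\FF\vee\GG$-stopping times as well as $\wh{\GG}$-stopping times). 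The stopped processes $M^{\tau_n}$ have uniformly bounded variation, so under this reducing sequence $M^{\tau_n}$ is a $\wh{\GG}$-martingale, and invoking \cite[Theorem 3.7]{FolProt2011}, exactly as in the earlier proof, transfers this to the smaller filtration $\FF\vee\GG$. Thus $M$ is an $\FF\vee\GG$-local martingale.

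Combining these steps, $\Gamma$ is $\FF$-adapted, its entries satisfy the sign and row-sum conditions \eqref{eq:int-cond} by \eqref{eq:intensity-cond}, and $M=H-\int_0^\cdot\Gamma_u^\top H_u\,du$ is an $\FF\vee\GG$-local martingale. This is precisely the content of Definition \ref{def:F-intensity}, so $\Gamma$ is an $\FF$-intensity of $X$ in that sense. I expect the only subtle point to be the filtration-reduction argument, but since the machinery has already been set up and invoked verbatim in Step 1 of Theorem \ref{thm:CMC-under-constr}, the corollary follows with essentially no extra work.
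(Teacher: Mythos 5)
Your proof is correct and follows essentially the same route as the paper: both invoke Theorem \ref{thm:characterization-DSMC}(ii), identify $\wh{M}$ with the process $M$ of Definition \ref{def:F-intensity}, and then shrink the filtration from $\wh{\GG}$ to $\FF \vee \GG$ via the same localizing sequence $\tau_n$ together with \cite[Theorem 3.7]{FolProt2011}. One tiny slip: the stopped processes $M^{\tau_n}$ are uniformly \emph{bounded} (by $1+n$, using \eqref{eq:intensity-cond}), not of uniformly bounded \emph{variation} --- the number of jumps of $H$ on $[0,\tau_n]$ is not controlled by stopping at $\tau_n$ --- but uniform boundedness is exactly what is needed to conclude that each $M^{\tau_n}$ is a true martingale.
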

\begin{proof}
The process $\wh{M}$ is a $\wh{\GG}$-local martingale by Theorem \ref{thm:characterization-DSMC}.ii).
{ In fact, it is also an $\FF \vee \GG$-local martingale. To see this, we take a reducing sequence of $\wh{\GG}$-stopping times 
\[ \tau_n := \inf \set{ t \geq 0: \int_0^t \sum_{y \in S} |\gamma^{yy}_s| ds \geq n }.
\] Since $\wh{M}$ is  also $\FF \vee \GG$-adapted and $(\tau_n)_{ n \geq 1}$ are also $\FF \vee \GG$ stopping times we see that $\wh{M}$ is an $\FF \vee \GG$-local martingale (see e.g. \cite[Theorem 3.7]{FolProt2011}).}
This implies that the $\FF$-adapted process $\Gamma$ is an $\FF$-intensity of $X$.
%
%
\end{proof}

\subsection{Relation between CMC and DSMC}

In this section we present some aspects of  relationship between the classes of $(\FF,\GG)$-CMCs and $(\FF,\GG)$-DSMCs.

\subsubsection{DSMCs that are CMCs}

\begin{proposition}\label{prop:DSMCisCMC}
Assume that $\FF$ and $\GG$ satisfy the immersion property \eqref{asm:immersion}, and that $X$ is an $(\FF,\GG)$-DSMC. Then $X$ is an $(\FF,\GG)$-CMC. In addition if $X$ considered as an $(\FF,\GG)$-DSMC admits intensity $\Gamma$, then $X$ considered as an $(\FF,\GG)$-CMC admits $\FF$-intensity $\Lambda = \Gamma$.
\end{proposition}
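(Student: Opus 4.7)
The plan is to derive the CMC property directly from the finite-dimensional conditional distributions of the DSMC, by performing a further conditioning in which immersion eliminates the $\mathcal{G}_t$-information outside of $\sigma(X_t)$.

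First, I would upgrade the one-step DSMC identity \eqref{eq:DSMC-def-3} to a multi-time formula: iterating Proposition~\ref{prop:c-tf} together with the tower property (exactly as in the proof of Proposition~\ref{wn2.4}, or by direct appeal to \cite[Lemma~3.1]{JakNie2010} applied with the base $\F_T \vee \G_t$ in place of $\F_T \vee \G_0$), one obtains, for $0 \leq t \leq t_1 \leq \ldots \leq t_k \leq T$ and $x_1,\ldots,x_k \in S$,
\begin{equation}\label{eq:plan-1}
\mathbb{P}\bigl( X_{t_1} = x_1, \ldots, X_{t_k} = x_k \mid \F_T \vee \G_t\bigr)
= \sum_{x \in S} \I_{\{X_t = x\}}\, p_{x,x_1}(t,t_1) \prod_{l=1}^{k-1} p_{x_l,x_{l+1}}(t_l,t_{l+1}).
\end{equation}

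Next, I would take the conditional expectation of \eqref{eq:plan-1} given $\F_t \vee \G_t$. Since $\F_t \vee \G_t \subset \F_T \vee \G_t$, the tower property applies on the left. On the right, the indicators $\I_{\{X_t = x\}}$ are $\G_t$-measurable and can be pulled out, while the random variable
\[
\Pi^x := p_{x,x_1}(t,t_1) \prod_{l=1}^{k-1} p_{x_l,x_{l+1}}(t_l,t_{l+1})
\]
is $\F_{t_k}$-measurable and hence $\F_T$-measurable, thanks to part~1) of Proposition~\ref{prop:c-tf}. This is where the immersion hypothesis \eqref{asm:immersion} intervenes: since every bounded $\F_T$-measurable random variable $Y$ satisfies $\E(Y\mid \F_t \vee \G_t) = \E(Y\mid \F_t)$ (this is the well-known reformulation of immersion used, e.g., in Step~3 of the proof of Theorem~\ref{thm:CMC-under-constr}), we obtain
\[
\mathbb{P}\bigl( X_{t_1} = x_1, \ldots, X_{t_k} = x_k \mid \F_t \vee \G_t\bigr)
= \sum_{x \in S} \I_{\{X_t = x\}}\, \E\bigl( \Pi^x \mid \F_t \bigr).
\]

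The right-hand side is $\F_t \vee \sigma(X_t)$-measurable. Since $\F_t \vee \sigma(X_t) \subset \F_t \vee \G_t$, a final application of the tower property yields
\[
\mathbb{P}\bigl( X_{t_1} = x_1, \ldots, X_{t_k} = x_k \mid \F_t \vee \sigma(X_t)\bigr)
= \mathbb{P}\bigl( X_{t_1} = x_1, \ldots, X_{t_k} = x_k \mid \F_t \vee \G_t\bigr),
\]
which is exactly the defining property \eqref{eq:CMC-def-1} of an $(\FF,\GG)$-CMC. For the additional claim on intensities, once $X$ is known to be an $(\FF,\GG)$-CMC, Corollary~\ref{cor:akuku} tells us that the DSMC intensity $\Gamma$ is already an $\FF$-intensity of $X$ in the sense of Definition~\ref{def:F-intensity}; thus we may take $\Lambda = \Gamma$. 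The only delicate step is the multi-time formula \eqref{eq:plan-1}, but it is a routine consequence of the DSMC definition and the cited lemma; no further obstacle is anticipated.
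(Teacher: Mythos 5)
Your proposal is correct and follows essentially the same route as the paper's own proof: both derive the multi-time conditional probability given $\F_T \vee \G_t$ from \cite[Lemma~3.1]{JakNie2010} (equivalently, iterating the DSMC property), condition down to $\F_t \vee \G_t$ pulling out the $\G_t$-measurable indicators, invoke the immersion reformulation $\E(\cdot\mid\F_t\vee\G_t)=\E(\cdot\mid\F_t)$ for bounded $\F_T$-measurable variables, and conclude from $\F_t\vee\sigma(X_t)$-measurability; the intensity claim is likewise dispatched via Corollary~\ref{cor:akuku} in both arguments. No gaps.
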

\begin{proof}
Let us fix arbitrary $x_1, \ldots, x_k \in S$ and  $0\leq t \leq t_1 \leq\ldots \leq t_k\leq T,$ and let us define a set $A$ by
\[
A = \set{ X_{t_1} =x_1, \ldots, X_{t_k} = x_k  }.
\]
We need  to show that
\begin{equation}\label{eq:CMC-A1}
\P( A | \F_t \vee \G_t)
=
\P( A | \F_t \vee \sigma(X_t)).
\end{equation}
Towards this end we first note that by Lemma 3.1  in \cite{JakNie2010}  we have
\begin{equation}\label{eq:DSMC-A-LHS}
\P( A | \F_T \vee \G_t)
\I_\set{X_t = x}
=\I_\set{ X_{t} = x } p_{x,x_1}(t,t_1)
\prod_{n=1}^{k-1} p_{x_{n} , x_{n+1}} (t_{n}, t_{n+1}).
\end{equation}
The tower property of conditional expectation and \eqref{eq:DSMC-A-LHS}
imply
\begin{align*}
\P( A | \F_t \vee \G_t)
&=
\E \left( \sum_{x \in S }\E \left(
\I_A| \F_T \vee \G_t \right)
\I_\set{X_t = x}
| \F_t \vee \G_t\right)
\\
&=
\E \left( \sum_{x \in S }
\I_\set{ X_{t} = x } p_{x,x_1}(t,t_1)
\prod_{n=1}^{k-1} p_{x_{n} , x_{n+1}} (t_{n}, t_{n+1})
| \F_t \vee \G_t\right)
\\
&=
\sum_{x \in S }
\I_\set{ X_{t} = x } \E \left( p_{x,x_1}(t,t_1)
\prod_{n=1}^{k-1} p_{x_{n} , x_{n+1}} (t_{n}, t_{n+1})
| \F_t \vee \G_t\right)
.
\end{align*}
Thus using the assumed immersion property  of $\FF$ in $\FF \vee \GG$  we obtain
\begin{align*}
\P( A | \F_t \vee \G_t)&=
\sum_{x \in S }
\I_\set{ X_{t} = x } \E \left( p_{x,x_1}(t,t_1)
\prod_{n=1}^{k-1} p_{x_{n} , x_{n+1}} (t_{n}, t_{n+1})
| \F_t \right),
\end{align*}
{which implies the CMC property.}

The second claim of the theorem follows immediately from Corollary \ref{cor:akuku}.
\finproof \end{proof}
The following example illustrates the use of Proposition \ref{prop:DSMCisCMC}.
\begin{example}(Time changed discrete Markov chain)
 Consider process $\bar{C}$, which is a discrete time Markov chain
with values in $S =\set{1, \ldots ,K }$ and with transition probability matrix $P$. In addition consider process $N$, which is a Cox process with c\` adl\` ag
$\FF$-intensity process $\tilde{\lambda}$.
From \cite[Theorem 7 and
9]{JakNie2007} we know that under assumption that
 the processes $(\bar{C}_k)_{k \geq 0}$ and $(N_t)_{t\in[0,T]}
$ are independent and  conditionally independent given
$\mathcal{F}_T$,  the  process
\[
C_t := \bar{C}_{N_t}
\] is
an $(\mathbb{F}, \mathbb{F}^C)$-DSMC. Moreover $C$ admits intensity process $\Gamma=[\gamma^{xy}]$ given as
 \[
    \gamma^{xy}_t = (P-I)_{x,y} \tilde{\lambda}_t .
    \]
Thus, by Corollary \ref{cor:hihihaha} and Proposition \ref{prop:DSMCisCMC}, the process $C$ is an $(\FF,\FF^C)$-CMC with $\FF$-intensity $\Lambda=\Gamma.$	

\end{example}


\subsubsection{CMCs that are DSMCs}
\begin{theorem}\label{thm:CMC-DSMC-int}
Suppose that $X$ is an $(\FF,\GG)$-CMC admitting an $\FF$-intensity $\Lambda$. In addition, suppose that $X$ is also an $(\FF,\GG)$-DSMC with an intensity $\Gamma$.
Then $\Gamma$ is an $\FF$-intensity of $X$ and $\Lambda$ is an intensity of $X$.
\end{theorem}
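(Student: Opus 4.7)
The plan is to chain together the uniqueness-up-to-equivalence propositions (\ref{rem:int-wersje} and \ref{rem:int-wersje-2}) with Corollary \ref{cor:akuku}, observing that both notions of ``equivalence relative to $X$'' are literally the same relation on matrix-valued processes, namely $\int_0^t (\Lambda_u - \wh\Lambda_u)^\top H_u\,du = 0$ for all $t\in[0,T]$.

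First I would dispose of the claim that $\Gamma$ is an $\FF$-intensity of $X$. This is essentially immediate from Corollary \ref{cor:akuku}: since $X$ is an $(\FF,\GG)$-DSMC with intensity $\Gamma$, the process $\wh M$ with components $\wh M^x_t = H^x_t - \int_0^t \gamma^{X_u, x}_u du$ is a $\wh\GG$-local martingale (Theorem \ref{thm:characterization-DSMC}.ii), and by the localization argument in the proof of Corollary \ref{cor:akuku} it is also an $\FF\vee\GG$-local martingale. Rewriting $\gamma^{X_u,x}_u = \sum_{y\in S} H^y_u \gamma^{yx}_u$ gives $\wh M_t = H_t - \int_0^t \Gamma_u^\top H_u\, du$, so $\Gamma$ satisfies Definition \ref{def:F-intensity} and is therefore an $\FF$-intensity of $X$.

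Next I would deduce that $\Lambda$ is a DSMC intensity of $X$. By the previous paragraph, the process $X$ admits two $\FF$-intensities in the sense of Definition \ref{def:F-intensity}, namely $\Gamma$ and $\Lambda$. Applying Proposition \ref{rem:int-wersje}.i) to these two $\FF$-intensities yields
\[
\int_0^t (\Lambda_u - \Gamma_u)^\top H_u\, du = 0, \qquad \forall\, t\in[0,T],
\]
i.e.\ $\Lambda$ and $\Gamma$ are equivalent relative to $X$ in the sense of the equivalence relation used throughout the paper. Since $\Lambda$ is $\FF$-adapted by hypothesis and $\Gamma$ is a DSMC intensity of $X$ by hypothesis, Proposition \ref{rem:int-wersje-2}.ii) applied to $\Gamma$ (as an intensity) and $\wh\Gamma := \Lambda$ (as the candidate equivalent process) shows that $\Lambda$ is also an intensity of $X$ as an $(\FF,\GG)$-DSMC.

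There is no genuine obstacle; the only thing to verify carefully is that the ``equivalence relative to $X$'' appearing in Proposition \ref{rem:int-wersje} coincides with the one in Proposition \ref{rem:int-wersje-2}, which it does by construction. The argument is therefore essentially a one-line corollary of the two uniqueness propositions together with Corollary \ref{cor:akuku}.
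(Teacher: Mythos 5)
Your proposal is correct and follows essentially the same route as the paper's own proof: Corollary \ref{cor:akuku} to show $\Gamma$ is an $\FF$-intensity, then Proposition \ref{rem:int-wersje}.i) to get equivalence of $\Lambda$ and $\Gamma$ relative to $X$, and finally Proposition \ref{rem:int-wersje-2}.ii) to conclude $\Lambda$ is a DSMC intensity. Your added observation that the two notions of equivalence coincide, and your unpacking of Corollary \ref{cor:akuku} via Theorem \ref{thm:characterization-DSMC}.ii), merely make explicit what the paper leaves implicit.
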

\begin{proof}
It follows from Corollary \ref{cor:akuku} that $\Gamma$ is an $\FF$-intensity.
Thus by Proposition \ref{rem:int-wersje} $\Lambda$ and $\Gamma$ are equivalent relative to $X$.
Consequently\j{,} by Proposition \ref{rem:int-wersje-2} process $\Lambda$ is an intensity of $X$.
\finproof
\end{proof}
This and Proposition \ref{wn2.4} imply
\begin{corollary}
If $X$ is an $(\FF,\GG)$-CMC with $\FF$-intensity and also an $(\FF,\GG)$-DSMC with intensity, then $\FF$-intensity (or, equivalently, intensity) and $\F_T$-conditional distribution of $X_0$ determine the $\F_T$-conditional distribution of $X$.
\end{corollary}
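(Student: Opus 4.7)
The plan is to chain together Theorem \ref{thm:CMC-DSMC-int}, the uniqueness clause in Remark \ref{rem:Kol-Eq}, and the representation formula \eqref{eq:c-fidis} from Proposition \ref{wn2.4}. The output will be an explicit formula for the $\F_T$-conditional finite-dimensional distributions of $X$ in terms of an intensity and the $\F_T$-conditional law of $X_0$, which then determines the $\F_T$-conditional law of $X$ as a c\`adl\`ag process.

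First I would invoke Theorem \ref{thm:CMC-DSMC-int} to identify the two notions of intensity. Namely, given the $\FF$-intensity $\Lambda$ of $X$ (viewed as a CMC), that theorem asserts that $\Lambda$ is also an intensity of $X$ viewed as a DSMC. Symmetrically, the DSMC intensity $\Gamma$ serves as an $\FF$-intensity in the CMC sense. In particular either object qualifies as the input in the discussion that follows, so the parenthetical ``($\FF$-intensity or, equivalently, intensity)'' is justified.

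Next, since $\Lambda$ is an intensity of the $(\FF,\GG)$-DSMC $X$, by Definition \ref{defdef} conditions 3)--4), the c-transition field $P$ of $X$ satisfies the Kolmogorov backward and forward equations \eqref{eq:INT-trans-prob-backward}--\eqref{eq:INT-trans-prob-forward} driven by $\Lambda$. By Remark \ref{rem:Kol-Eq}, these equations have a unique solution (given explicitly by the Peano--Baker series or the Magnus expansion), so $P$ is completely recovered from $\Lambda$. Thus the intensity pins down the c-transition field pathwise, $\omega$ by $\omega$.

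Finally, I would apply formula \eqref{eq:c-fidis} of Proposition \ref{wn2.4}: for any $0 = t_0 \leq t_1 \leq \ldots \leq t_n \leq T$ and any $(x_1,\ldots,x_n)\in S^n$,
\[
\P(X_{t_1}=x_1,\ldots,X_{t_n}=x_n \mid \F_T) = \sum_{x_0 \in S}\P(X_0 = x_0 \mid \F_T)\prod_{k=0}^{n-1}p_{x_k,x_{k+1}}(t_k,t_{k+1}).
\]
The right-hand side is a function solely of $P$ (hence of $\Lambda$, by the previous paragraph) and of the $\F_T$-conditional distribution of $X_0$. Hence all $\F_T$-conditional finite-dimensional distributions of $X$ are determined by these two inputs, and because $X$ is an $S$-valued c\`adl\`ag process its $\F_T$-conditional law is in turn characterized by its finite-dimensional distributions; the claim follows. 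No real obstacle arises: everything is a bookkeeping assembly of previously established facts, the only point requiring care being the identification of the CMC $\FF$-intensity with a DSMC intensity so that the Kolmogorov machinery is applicable.
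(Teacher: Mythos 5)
Your proposal is correct and takes essentially the same route as the paper: the paper's entire justification of the corollary is the one-line remark that Theorem \ref{thm:CMC-DSMC-int} and Proposition \ref{wn2.4} imply it, and your write-up simply fills in the implicit intermediate step, namely that the intensity pins down the c-transition field $P$ through the uniqueness of solutions to the Kolmogorov equations (Remark \ref{rem:Kol-Eq}), after which formula \eqref{eq:c-fidis} yields the $\F_T$-conditional finite-dimensional distributions.
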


In case of process $X$ constructed in Theorem \ref{thm:CMC-under-constr} the result of  Theorem \ref{thm:CMC-DSMC-int} can be strengthen  as follows.
\bp\label{thm:CMC-DCMC}
Let $X$ be a process constructed in Theorem \ref{thm:CMC-under-constr}, so that  $X$ is an  $(\FF, \GG)$-CMC process with an $\FF$-intensity process $\Lambda$.
Then $X$ is also an $(\FF, \GG)$-DSMC with an intensity process $\Gamma = \Lambda$.
\ep
\begin{proof}
In Step 1 of the proof of Theorem \ref{thm:CMC-under-constr} we showed that the processes $\wh{K}^{xy}$, $x,y \in S$, $x \neq y $, given by \eqref{emka}, are $\wh{\GG}\,$--$\,$local martingales. Thus, by Theorem \ref{thm:characterization-DSMC}, $X$ is an $(\FF,\GG)$-DSMC with intensity  $\Lambda$.
\finproof
\end{proof}

%
\subsubsection{Pure jump semimartingales that are both CMCs and DSMCs}

\bt\label{prop:CMCisDSMC}
Let $\FF$, $\GG$ satisfy the immersion property \eqref{asm:immersion}.
Assume that  $S$--valued $\GG$--adapted pure jump semimartingale $X$ admits an $\FF$-intensity $\Lambda$.
Moreover suppose that the orthogonality  property \eqref{asm:ort-to-M} is fulfilled.
 Then $X$ is an $(\FF,\GG)$-CMC and an $(\FF, \GG)$-DSMC with intensity $\Lambda$.
\et
\begin{proof}
In Theorem \ref{thm:CMC-char} we showed that $X$ is an $(\FF,\GG)$-CMC.  In order to prove that $X$ is an $(\FF,\GG)$-DSMC it suffices to show that for
for every
$A \in \F_T$, $B \in \G_t$, $t  \leq u$ and $y \in S$ it holds that
\begin{align}\label{eq:DSMC-inaczej}
\E( \I_A \I_B \I_\set{X_u = y})
=
\E( \I_A \I_B H^\top_t Z_t Y_u e_y),
\end{align}
where $Y$ and  $Z$ are defined by \eqref{eq:random-ODE-Y-pom} and \eqref{eq:random-ODE-X-pom}, respectively.
Indeed, by the monotone class theorem, the above yields
\begin{equation}\label{eq:Nu-mart}
\P({X_u = y} | \F_T \vee \G_t)
=H^\top_t Z_t Y_u e_y.
\end{equation}
Consequently, since the right hand side of \eqref{eq:Nu-mart} is measurable with respect to $\F_{{T}} \vee \sigma(X_t)$ we obtain the desired $(\FF, \GG)$-DSMC property of
$X$.

It remains to  prove \eqref{eq:DSMC-inaczej}.
 {
Since $Z_t Y_{u}e_{y} \I_A \in L^1(\F_T)$ (see Lemma \ref{eq:YZY-are-bounded}), the following formula
\[
V_s :=  \I_B H^\top_s \E\big(Z_s Y_{u}e_{y} \I_A| \F_s \big), \quad s\in [t,u],
\]
well defines a  process $V$ on $[t,u]$. Now, let  a Doob martingale $D$ be defined on $[0,T]$ by
\[
D_s =\E(\I_A \I_B \I_\set{X_u = y} | \F_s \vee \G_s), \quad s \in [0,T].
\]
The immersion property \eqref{asm:immersion} leads  to }%
%
%
%

\[
	V_t = \I_B H^\top_t \E\big(Z_t Y_{u}e_{y} \I_A| \F_t \vee \G_t \big)
=
\E\big(\I_A\I_B H^\top_t Z_t Y_{u}e_{y} | \F_t \vee \G_t \big).
\]
Next, we will show that $V_t =D_t$, which in turn will imply that
\begin{align}\label{eq:DSMC-inaczej-2}
\E(\I_A \I_B \I_\set{X_u = y} ) = \E D_t	= \E V_t = \E (\I_A\I_B H^\top_t Z_t Y_{u}e_{y}),
\end{align}
which is  \eqref{eq:DSMC-inaczej}.

\noindent In order to show that $D_t = V_t$ we will demonstrate a stronger result, namely that $V=D$ on $[t,u]$.
To this end, note that by Lemma \ref{lem:aux-mart} $V$ is {an} $\FF \vee \GG$-martingale on the interval $[t,u]$. Thus, to show that  $V=D$ on $[t,u]$ it suffices to show that $V_u= D_u$. For this purpose, let us define on  $[u,T]$ the process $W$ by
\[
W_s := \I_B \I_\set{X_u = y} \E(\I_A  | \F_s ).
\]
Next we observe that for $s \in [u,T]$
\[
	D_s =\I_B \I_\set{X_u = y} \E(\I_A  | \F_s \vee \G_s)
=
\I_B \I_\set{X_u = y} \E(\I_A  | \F_s ) = W_s,
\]
where the penultimate equality follows from immersion of $\FF$ in $\FF \vee \GG$.
Hence, using the fact that $Z_u Y_u = \mathrm{I} $ (see Lemma \ref{eq:YZY-are-bounded}), we have
\[
D_u = W_u = \I_B \I_\set{X_u = y} \E(\I_A  | \F_u ) = \I_B H^\top_u e_y \E(\I_A  | \F_u )= \I_B H^\top_u \E(Z_u Y_u e_y \I_A  | \F_u )= V_u.
\]
Thus, by the martingale property of $D$ and $V$ (on $[t,u]$), we conclude that $D = V$ on $[t,u]$.
This completes the proof of \eqref{eq:DSMC-inaczej} and, consequently, demonstrates that $X$ is an $(\FF, \GG)$-DSMC.

In order to verify that $X$ admits intensity $\Lambda$ we first note that the random field $P$ defined as
\be\lab{eq:Ptu}
P(t,u) := Z_t Y_u
\ee
solves the Kolmogorov equations \eqref{eq:INT-trans-prob-backward} and  \eqref{eq:INT-trans-prob-forward}. Next we observe that \eqref{eq:Nu-mart} implies the martingale property of $N^u$ given as in \eqref{eq:N-mart}, with $P(t,u)$ as in \eqref{eq:Ptu}.
Thus, by Theorem \ref{thm:characterization-DSMC}, $\Lambda$ is an intensity of $X$.
 The proof of the theorem is now complete.
\finproof\end{proof}

%
\section{Appendices}
\subsection*{Appendix A}
In this appendix we provide technical results needed for derivations done in Section \ref{constr}.
\begin{lemma}\label{pilambda}
Let  $\xi$ be an $S$--valued random variable defined on a filtered probability space $(\Omega, \cA, \HH  ,\wt \P)$ with $\HH =\set{\cH_t}_{t \in [0,T]}$.
Suppose that
\be\lab{eq:cond-FTF0}
 \E_{\wt \P}( h(\xi) | \H_T )
=
 \E_{\wt \P}( h(\xi) | \H_0 )
\ee
for every real valued function $h$ on $S$. Then $\HH$ is ${\wt \P}$--immersed in $\HH \vee \sigma(\xi)$.
\end{lemma}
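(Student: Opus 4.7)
The plan is to verify the immersion property in its standard equivalent form: for every $t \in [0,T]$ and every bounded $\cH_T$--measurable $Y$ one has $\E_{\wt\P}(Y\mid \cH_t \vee \sigma(\xi)) = \E_{\wt\P}(Y\mid \cH_t)$, $\wt\P$--almost surely (cf.\ \cite[Proposition 5.9.1.1]{JeaYorChe2009}). The right-hand side is already $\cH_t \vee \sigma(\xi)$--measurable, so by a monotone class / $\pi$-system argument based on the $\pi$-system $\{A \cap \{\xi = x\} : A \in \cH_t,\ x \in S\}$ it suffices to check that $\E_{\wt\P}(Y\, h(\xi)\, Z_t) = \E_{\wt\P}(\E_{\wt\P}(Y \mid \cH_t)\, h(\xi)\, Z_t)$ for every bounded $\cH_t$--measurable $Z_t$ and every function $h \colon S \to \bR$.

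To verify this identity I would first condition the left-hand side on $\cH_T$. Since $YZ_t$ is $\cH_T$--measurable, this yields $\E_{\wt\P}(Y Z_t\, \E_{\wt\P}(h(\xi) \mid \cH_T))$. The hypothesis \eqref{eq:cond-FTF0} lets me replace $\E_{\wt\P}(h(\xi) \mid \cH_T)$ by $\E_{\wt\P}(h(\xi) \mid \cH_0)$, which is $\cH_0$--measurable and hence in particular $\cH_t$--measurable. Conditioning next on $\cH_t$ pulls $Y$ out as $\E_{\wt\P}(Y \mid \cH_t)$ and gives $\E_{\wt\P}(\E_{\wt\P}(Y \mid \cH_t)\, Z_t\, \E_{\wt\P}(h(\xi) \mid \cH_0))$. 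Running the same two steps in reverse---reapplying \eqref{eq:cond-FTF0} to swap $\cH_0$ back to $\cH_T$ and then conditioning on $\cH_T$ to reabsorb the inner conditional expectation---produces exactly $\E_{\wt\P}(\E_{\wt\P}(Y \mid \cH_t)\, h(\xi)\, Z_t)$, which is what was needed.

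No serious obstacle is anticipated; the argument amounts to bookkeeping with the tower property, and the proof is short. The one point worth emphasizing is the two-fold use of the hypothesis \eqref{eq:cond-FTF0}: it is what allows the $\xi$-dependent factor to be momentarily rendered $\cH_0$-- (and hence $\cH_t$--) measurable, so that $Y$ can be integrated out against $\cH_t$ in the middle of the computation, and symmetrically then restored to $h(\xi)$ at the end.
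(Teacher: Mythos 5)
Your proof is correct and follows essentially the same route as the paper's: reduce immersion to the standard characterization $\E_{\wt\P}(\psi\mid\cH_t\vee\sigma(\xi))=\E_{\wt\P}(\psi\mid\cH_t)$, reduce further by a $\pi$-$\lambda$ argument to testing against products of an $\cH_t$-measurable factor and a function of $\xi$, and then run the tower property through $\cH_T$, use \eqref{eq:cond-FTF0} to replace $\E_{\wt\P}(h(\xi)\mid\cH_T)$ by the $\cH_0$-measurable $\E_{\wt\P}(h(\xi)\mid\cH_0)$, and condition on $\cH_t$. The only differences are cosmetic (the paper tests against indicators $\I_A\I_B(\xi)$ and evaluates both sides separately to the common value $\E_{\wt\P}(\psi\I_A\E_{\wt\P}(\I_B(\xi)\mid\cH_0))$, and cites a different but equivalent reference for the characterization of immersion).
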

\begin{proof}
It is sufficient to prove (c.f. \cite[Lemma 6.1.1]{BieRut2004}) that for every $\psi \in L^\infty(\H_T)$ it holds that
\be\lab{eq:immersFxi1}
	\E_{\wt \P} ( \psi | \H_t \vee \sigma(\xi))
=
	\E_{\wt \P} ( \psi | \H_t ), \quad  \forall \tT.
\ee
Let us fix $\tT$ and $\psi \in L^\infty(\H_T)$.
By {the} standard $\pi-\lambda$ system arguments it is enough to show that
\be\lab{eq:immersFxi2}
		\E_{\wt \P} ( \psi \I_A \I_B (\xi) )
=
		\E_{\wt \P} ( \E_{\wt \P} ( \psi | \H_t ) \I_A \I_B(\xi) ), \quad
\forall
A \in \H_t, B \subseteq S,
\ee
where
\[
    \I_B(\xi) =
    \left\{
      \begin{array}{ll}
        1, & \xi \in B,  \\
        0, & \xi \notin B.
      \end{array}
    \right.
\]
Towards this end we first derive another representation of the right hand side in \eqref{eq:immersFxi2},
\begin{align*}
\E_{\wt \P} ( \E_{\wt \P} ( \psi | \H_t ) \I_A \I_B(\xi) )
&=		\E_{\wt \P} ( \E_{\wt \P} ( \psi \I_A | \H_t ) \I_B(\xi) )
=
		\E_{\wt \P} ( \E_{\wt \P} ( \E_{\wt \P} ( \psi \I_A | \H_t ) \I_B(\xi) | \H_T ))
\\
&=
		\E_{\wt \P} ( \E_{\wt \P} ( \psi \I_A | \H_t ) \E_{\wt \P} (  \I_B(\xi) | \H_T ))
=
		\E_{\wt \P} ( \E_{\wt \P} ( \psi \I_A | \H_t ) \E_{\wt \P} (  \I_B(\xi) | \H_0 ))
\\
&=
		\E_{\wt \P} ( \E_{\wt \P} ( \psi \I_A \E_{\wt \P} (  \I_B(\xi) | \H_0 ) | \H_t ) )
=
		\E_{\wt \P} ( \psi \I_A \E_{\wt \P} (  \I_B(\xi) | \H_0 )),
\end{align*}
where the fourth equality follows from \eqref{eq:cond-FTF0}.
The left hand side of \eqref{eq:immersFxi2} can be rewritten as
\begin{align*}
		\E_{\wt \P} ( \psi \I_A \I_B (\xi) )
&=
		\E_{\wt \P} ( \E_{\wt \P} ( \psi \I_A \I_B (\xi) | \H_T))
=
		\E_{\wt \P} ( \psi \I_A \E_{\wt \P} ( \I_B (\xi) | \H_T)) \\
&
=
\E_{\wt \P} ( \psi \I_A \E_{\wt \P} ( \I_B (\xi) | \H_0)),
\end{align*}
where the last equality follows from \eqref{eq:cond-FTF0}.
This proves \eqref{eq:immersFxi2} and thus concludes the proof of the lemma.
\finproof
\end{proof}
\begin{corollary}\lab{cor:immersion}
Let $\bK$ be a filtration on  $(\Omega, \cA  ,\wt \P)$, such that it is independent of  $\HH \vee \sigma(\xi)$.
Suppose that $\xi$ satisfies \eqref{eq:cond-FTF0}. Then
$\HH$ is $\wt \P$--immersed in $ \HH \vee \bK \vee \sigma(\xi)$.
\end{corollary}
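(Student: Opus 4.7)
The plan is to combine Lemma \ref{pilambda} with the hypothesis that $\bK$ is independent of $\HH \vee \sigma(\xi)$, using transitivity of the immersion property. Concretely, I would split the claim into two elementary immersions and then chain them.

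First, Lemma \ref{pilambda} applied directly to the hypothesis \eqref{eq:cond-FTF0} gives that $\HH$ is $\wt\P$-immersed in $\HH \vee \sigma(\xi)$. Second, writing $\mathbb{L} := \HH \vee \sigma(\xi)$, I would show that $\mathbb{L}$ is $\wt\P$-immersed in $\mathbb{L} \vee \bK$. This is the standard fact that independence implies immersion: since $\bK_T$ is independent of $\mathbb{L}_T$, for every $Y \in L^\infty(\mathbb{L}_T)$ and every $t \in [0,T]$, conditioning on the joint $\sigma$-algebra $\mathbb{L}_t \vee \bK_t$ allows the $\bK_t$ part to be integrated out, yielding
\[
\E_{\wt\P}(Y \mid \mathbb{L}_t \vee \bK_t) \;=\; \E_{\wt\P}(Y \mid \mathbb{L}_t),
\]
which is equivalent to the desired immersion (e.g.\ via Proposition 5.9.1.1 of \cite{JeaYorChe2009}).

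Finally I would invoke transitivity of immersion: if $\HH$ is immersed in $\mathbb{L}$ and $\mathbb{L}$ is immersed in $\mathbb{L} \vee \bK$, then any $(\wt\P, \HH)$-local martingale is a $(\wt\P, \mathbb{L})$-local martingale and hence a $(\wt\P, \mathbb{L} \vee \bK)$-local martingale, so $\HH$ is immersed in $\mathbb{L} \vee \bK = \HH \vee \bK \vee \sigma(\xi)$, which is the claim. An equivalent direct route would be to repeat the $\pi$-$\lambda$ argument of Lemma \ref{pilambda}: for $\psi \in L^\infty(\H_T)$, it suffices to verify that $\E_{\wt\P}(\psi \I_A \I_B(\xi) \eta) = \E_{\wt\P}(\E_{\wt\P}(\psi \mid \H_t)\, \I_A \I_B(\xi) \eta)$ for all $A \in \H_t$, $B \subseteq S$, $\eta \in L^\infty(\bK_t)$, using independence of $\bK$ from $\H_T \vee \sigma(\xi)$ to factor $\E_{\wt\P}(\eta)$ out on both sides, and then applying Lemma \ref{pilambda} to the remaining factor. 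There is no real obstacle: both approaches reduce the corollary to a bookkeeping exercise combining the previous lemma with the independence assumption.
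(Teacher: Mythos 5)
Your proof is correct and follows essentially the same route as the paper: the paper's own argument is precisely the combination of Lemma \ref{pilambda} (giving immersion of $\HH$ in $\HH \vee \sigma(\xi)$) with the standard fact that independence of filtrations yields immersion, chained together exactly as you do. Your write-up merely makes explicit the transitivity step that the paper leaves implicit.
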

\begin{proof}
The result follows from Lemma \ref{pilambda} and from the fact that if $\HH^1$ and $\HH^2$ are two independent filtrations on  $(\Omega, \cA  ,\wt \P)$, then $\HH^1$ is $\wt \P$-immersed in $\HH^1 \vee \HH^2$.
\finproof
\end{proof}
In the next lemma we use the same probabilistic setup as in Section \ref{sec:CMC-and-int}.
\begin{lemma}\lab{lem:indep-Poisson}
Let $X$ be an $\FF$ adapted c\`adl\`ag process, and let $N$ be a Poisson process.
Suppose that $N$ and $\FF$  are independent.
Then
\[
\P\left( \set{ \omega \in \Omega  : \exists t \in [0,T] \ s.t. \  \Delta X_t(\omega) \Delta N_t(\omega) \neq 0 }\right) = 0.
\]
\end{lemma}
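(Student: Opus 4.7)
The plan is to exhaust the jump times of $X$ by a countable family of $\FF$--stopping times, and then to show that $N$ almost surely does not jump at any fixed $\FF$--stopping time.

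First, I would invoke the classical exhaustion of jumps for an $\FF$--adapted c\`adl\`ag process (see e.g.\ He, Wang and Yan \cite{HeWanYan1992}, Theorem 3.32): the random set $\{(\omega,t)\in\Omega\times[0,T]:\Delta X_t(\omega)\ne 0\}$ is a thin optional set, hence there exists a sequence $(\tau_n)_{n\ge 1}$ of $\FF$--stopping times with disjoint graphs whose union exhausts it. In particular, if $\Delta X_t(\omega)\ne 0$ for some $t\in[0,T]$, then $t=\tau_n(\omega)$ for some $n$.

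Next, I would fix an $\FF$--stopping time $\tau$ taking values in $[0,T]$ and prove $\P(\Delta N_\tau\ne 0)=0$. Since $\tau$ is $\F_T$-measurable and $N$ is independent of $\F_T$, conditioning on $\F_T$ freezes $\tau$ while leaving the law of $N$ unchanged. More precisely, set $\varphi(s,n)=\I_{\{\Delta n_s\ne 0\}}$ on $[0,T]\times D([0,T])$; by independence of $N$ from $\F_T$,
\[
\P(\Delta N_\tau\ne 0\mid\F_T)(\omega)=\E\bigl[\varphi(\tau(\omega),N)\bigr]=\P(\Delta N_{\tau(\omega)}\ne 0)=0,
\]
where the last equality is the standard fact that a Poisson process has no fixed discontinuities, i.e.\ $\P(\Delta N_s\ne 0)=0$ for every $s\in[0,T]$. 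Taking expectations yields $\P(\Delta N_\tau\ne 0)=0$.

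Finally, countable subadditivity applied to the exhausting sequence gives
\[
\P\bigl(\exists\,t\in[0,T]:\Delta X_t\,\Delta N_t\ne 0\bigr)\le\sum_{n\ge 1}\P(\Delta N_{\tau_n}\ne 0)=0,
\]
which is the claim. The only delicate point is the justification of $\P(\Delta N_\tau\ne 0\mid\F_T)=0$: one must argue that $\Delta N_\tau$ makes sense as a measurable evaluation of the Poisson path at an $\F_T$--measurable random time, and that independence of the whole process $N$ from $\F_T$ lets us read off the conditional probability from the unconditional no-fixed-jump property. Once this functional form of independence is in hand, the rest of the argument is purely bookkeeping.
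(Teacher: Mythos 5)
Your proof is correct, and it follows the same outline as the paper's — reduce the common-jump event to a countable union and kill each term by independence — but it implements the key step differently. The paper enumerates the jump times of \emph{both} processes, $(T_n)_{n\geq 1}$ for $X$ and $(S_k)_{k\geq 1}$ for $N$, notes that independence of $N$ and $\FF$ makes the two sequences independent, and uses the fact that each Poisson arrival time $S_k$ is Gamma distributed (hence has a density) to get $\P(T_n=S_k)=0$; summing over $n,k$ finishes. You instead exhaust only the jumps of $X$ by $\FF$-stopping times $(\tau_n)$ and show $\P(\Delta N_{\tau_n}\neq 0)=0$ for each $n$ by the freezing lemma together with the absence of fixed discontinuities of $N$. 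What your version buys: it never invokes the explicit law of the Poisson arrival times, so it works verbatim for any process $N$ independent of $\FF$ that is stochastically continuous; it also treats the measurable enumeration of the jump times of $X$ with more care (thin optional set exhaustion), a point the paper glosses over by simply ``denoting'' the jump times as a sequence. What the paper's version buys: it is more elementary, requiring no conditional freezing argument and no optional section theorem — just two independent real random variables, one of which has a density. Note finally that the stopping-time property of your $\tau_n$ is never actually used: all your argument needs is that each $\tau_n$ is $\F_T$-measurable, which any measurable enumeration of the jumps of the $\FF$-adapted c\`adl\`ag process $X$ provides.
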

\begin{proof}
First note that both  $X$ and $N$ have countable number of jumps on $[0,T]$, and let denote their jump times as $(T_n)_{n\geq 1}$ and  $(S_n)_{n \geq 1}$, respectively.
Independence of $N$ and $\FF$  implies  that $(T_n)_{n\geq 1}$ and $(S_n)_{n\geq 1}$ are independent. Since each random variable  $S_n$ is Gamma distributed  and thus has density,  then for any $n,k \geq 1$ it holds that
\[
\P( T_n= S_k ) =0.
\]
Since
\[
A := \set{ \omega : \exists \tT \ s.t. \  \Delta X_t(\omega)  \Delta N_t(\omega) \neq 0}
=
\bigcup_{n,k \geq 1}
\set{ \omega : T_n(\omega) = S_k(\omega) }
\]
 we have
\[
\P(A)
\leq
\sum_{n,k \geq 1}
\P( T_n= S_k )
=0.
\]
\finproof
\end{proof}

\subsection*{Appendix B}

In this appendix we derive some technical results that are used in Section  \ref{sec:CMC-and-int} and Section \ref{cmcdsmc}.

\begin{lemma}\label{eq:YZY-are-bounded}
Let $Z$ and $Y$ be solutions of the random ODE's
\begin{equation}\label{eq:random-ODE-X-pom}
d Z_t  = - \Psi_t Z_t dt, \ \ \ \ \ Z_0 = \mathrm{I}, \quad \tT,
\end{equation}
\begin{equation}\label{eq:random-ODE-Y-pom}
d Y_t  = Y_t \Psi_t  dt, \ \ \ \ \ Y_0 = \mathrm{I}, \quad \tT,
\end{equation}
where $\Psi$ is an appropriately measurable matrix valued process satisfying \eqref{eq:int-cond}\footnote{For any $\omega$  for which $\Psi$
does not satisfy \eqref{eq:int-cond}, we set $\Psi_t(\omega)=0$ for all $\tT$. } and such that
\be\label{eq:int-integrable}
	\sum_{x \in S } \int_0^T |\psi^{xx}_u| du < \infty.
\ee
Then, the matrix valued random processes   $(Y_t)_{0\leq t \leq T}$ and $(Z_t Y_v)_{0\leq t\leq v}$, $v\in [0,T]$, have elements that are nonnegative  and bounded by $1$. Moreover
\begin{equation}\label{app1}
Z_t Y_t= \mathrm{I}\quad \textrm{for}\  \tT.
\end{equation}
\end{lemma}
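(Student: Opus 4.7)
The plan is to establish the three assertions in the order: (i) the identity $Z_tY_t = I$; (ii) the entries of $Y_t$ lie in $[0,1]$; (iii) the entries of $Z_tY_v$ lie in $[0,1]$ for $t\le v$. For (i), I would apply the product rule to $W_t := Z_tY_t$. By \eqref{eq:random-ODE-X-pom} and \eqref{eq:random-ODE-Y-pom} this gives $dW_t = -\Psi_tW_t\,dt + W_t\Psi_t\,dt$ with $W_0 = I$. The constant process $W \equiv I$ is manifestly a solution (the two terms cancel). The assumption \eqref{eq:int-integrable}, together with the identity $\sum_y\psi_t^{xy}=0$ from \eqref{eq:int-cond}, forces $|\psi_t^{xy}| \le -\psi_t^{xx}$ for $x\ne y$ and hence $\int_0^T\|\Psi_u\|\,du < \infty$; standard uniqueness for linear matrix ODEs with integrable coefficients then forces $W_t \equiv I$, i.e.\ \eqref{app1}.

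For (ii), I would first establish the row-sum identity $Y_t\mathbf{1} = \mathbf{1}$ by differentiating and using $\Psi_t\mathbf{1}=0$. Nonnegativity of the entries is the main technical obstacle. To handle it I would use a gauge transformation that decouples the nonpositive diagonal from the nonnegative off-diagonal part of $\Psi_t$. Write $\Psi_t = D_t + Q_t$, where $D_t := \mathrm{diag}(\psi_t^{xx})$ has nonpositive entries and $Q_t := \Psi_t - D_t$ is entrywise nonnegative with zero diagonal. Let $F_t$ be the diagonal matrix with $(F_t)_{xx} := \exp\!\bigl(\int_0^t \psi_u^{xx}\,du\bigr) \in (0,1]$, and set $U_t := Y_tF_t^{-1}$. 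A direct calculation using $dF_t = F_tD_t\,dt$ shows that $dU_t = U_t\,\tilde Q_t\,dt$ with $U_0 = I$, where $\tilde Q_t := F_tQ_tF_t^{-1}$ has nonnegative entries. Integrability of $\tilde Q_t$ on $[0,T]$ follows from $(F_t)_{xx}\le 1$ and $(F_t)_{yy}^{-1} \le \exp(\int_0^T|\psi_u^{yy}|\,du) < \infty$, invoking \eqref{eq:int-integrable}. Picard iteration for the equation for $U_t$ then produces a sequence of entrywise nonnegative matrices converging to $U_t$, so $U_t \ge 0$ entrywise; consequently $Y_t = U_tF_t$ has nonnegative entries, and combined with $Y_t\mathbf{1}=\mathbf{1}$ we conclude $Y_t^{xy} \in [0,1]$.

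For (iii), I would fix $t \in [0,T]$ and view the map $v \mapsto Z_tY_v$ on $[t,T]$ as a new forward Kolmogorov evolution. Differentiating in $v$ gives $\frac{d}{dv}(Z_tY_v) = (Z_tY_v)\Psi_v$, and by step (i) the initial value at $v=t$ equals $Z_tY_t = I$. This is precisely the ODE satisfied by $Y$, only shifted to start at time $t$, so the argument of the previous paragraph applies verbatim (with the interval $[t,T]$ in place of $[0,T]$) and yields $(Z_tY_v)^{xy} \in [0,1]$ for all $t \le v \le T$, completing the proof.
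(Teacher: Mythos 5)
Your proof is correct, but it takes a genuinely different route from the paper's. The paper disposes of the $[0,1]$ bounds by citation: since $\Psi$ satisfies \eqref{eq:int-cond}, for each $\omega$ the process $Y$ solves a matrix forward Kolmogorov equation, and for fixed $v$ the map $t \mapsto Z_tY_v$ solves the backward Kolmogorov equation with terminal condition $Z_vY_v = \mathrm{I}$; both are therefore conditional transition-probability matrices, whose entries lie in $[0,1]$ by the product-integration theory of Gill and Johansen \cite[Thm.~12 and Thm.~13]{GilJoh1990}, while the identity \eqref{app1} is dismissed there as ``easy to verify.'' You instead prove everything from first principles: \eqref{app1} via uniqueness for the linear matrix ODE satisfied by $W_t = Z_tY_t$, of which $\mathrm{I}$ is visibly a solution; nonnegativity of $Y$ via the gauge transformation $U_t = Y_tF_t^{-1}$, which strips out the nonpositive diagonal and leaves an equation with entrywise nonnegative, integrable coefficients, so that Picard iteration (equivalently, the Peano--Baker series of Remark \ref{rem:Kol-Eq}) yields $U_t \ge 0$ entrywise; and the upper bound via conservation of row sums, $Y_t\mathbf{1} = \mathbf{1}$. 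You also handle $Z_tY_v$ by the forward equation in $v$ on $[t,T]$ with initial condition $\mathrm{I}$ (using \eqref{app1}), where the paper uses the backward equation in $t$ on $[0,v]$. What the paper's route buys is brevity and the probabilistic interpretation of the entries; what yours buys is a self-contained argument that never leaves the lemma, plus a genuinely rigorous proof of \eqref{app1}: the product rule in the order $Z_tY_t$ gives $dW_t = (-\Psi_t W_t + W_t\Psi_t)\,dt$, whose right-hand side does not cancel for arbitrary $W$, so your appeal to uniqueness (or, alternatively, observing that $Y_tZ_t$ has identically zero derivative) is exactly the detail the paper glosses over. The only item of the paper's proof you do not address is the measurability in $\omega$ of $Y_t(\cdot)$ and $Z_t(\cdot)$, a minor point needed for $Y$ and $Z$ to qualify as random processes; it follows from the series representations you already invoke.
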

\begin{proof}
Using Remark \ref{rem:Kol-Eq} one can verify that for each $t$, the functions $Y_t(\cdot)$ and $Z_t(\cdot)$ are measurable, so that $Y$ and $Z$ are  matrix valued random processes.

Since $\Psi$ satisfies \eqref{eq:int-cond}, then for every $\omega$,  $Y_{\cdot}(\omega)$ is a solution of matrix forward Kolmogorov equation, and so its elements belong to the interval $[0,1]$ (since they give conditional probabilities, see e.g. Gill and Johansen \cite[Thm. 12 and  Thm. 13]{GilJoh1990}).

Next, observe that, letting $Z(t,v) = Z_t Y_v$ we have that
\[
d_t Z(t,v) = (d Z_t) Y_v =  - \Psi_t Z_t Y_v dt =  - \Psi_t Z(t,v)  dt, \quad 0\leq t \leq v.
\]
Moreover, it is easy to verify that $Z(v,v) = Z_v Y_v= Z_0 Y_0=\mathrm{I}$. We thus see that for every $\omega$, $Z(\cdot,v)(\omega)$ satisfies the Kolmogorov backward equation,
\[
d_t Z(t,v) =   - \Psi_t Z(t,v)  dt, \quad 0\leq t \leq v, \quad Z(v,v)=\mathrm{I},
\]
and so, it has non-negative elements bounded by $1$.
\finproof
\end{proof}

The following lemma is used in the proof of Theorem \ref{thm:CMC-char}.
\begin{lemma}\label{lem:aux-mart}
Suppose that assumptions of Theorem \ref{thm:CMC-char} are satisfied.
Let U  be an $\bR^d$-valued bounded random variable, and let $Z$ and $Y$ be solutions of the random ODE's
\eqref{eq:random-ODE-X-pom} and
\eqref{eq:random-ODE-Y-pom}, respectively.
Fix $u$ and $v$ satisfying $0\leq  u < v \leq T,$ and fix  set  $A \in \F_u \vee \G_u$.
Then, process $V$ given by
\[
	V_t = \I_A H^\top_t Z_t\E( Y_{v} U | \F_t), \quad t \in [0,T],
\]
is an $\FF \vee \GG $ martingale on the interval
$[u, v]$.
%
\end{lemma}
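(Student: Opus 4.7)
The plan is to show that the ``frozen'' factor $\I_A$ can be pulled out, reducing the claim to the martingale property of $W_t := H_t^\top Z_t\,\pi_t$ on $[u,v]$, where $\pi_t := \E(Y_v U \mid \F_t)$. Since $\I_A$ is $\F_u\vee\G_u$--measurable and $u\le s\le t\le v$, if $W$ is an $\FF\vee\GG$--martingale on $[u,v]$ then $\E(\I_A W_t\mid \F_s\vee\G_s)=\I_A \E(W_t\mid \F_s\vee\G_s)=\I_A W_s$, yielding the claim.

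First I would show that $L_t := H_t^\top Z_t$ is an $\FF\vee\GG$--local martingale. Since $Z$ satisfies $dZ_t=-\Lambda_t Z_t\,dt$ (continuous, finite variation) and $H$ admits the decomposition $dH_t = \Lambda_t^\top H_t\,dt + dM_t$ with $M$ an $\FF\vee\GG$--local martingale, integration by parts (the covariation term vanishes because $Z$ is continuous) gives
\[
dL_t \;=\; dH_t^\top\, Z_t + H_t^\top\, dZ_t \;=\; \bigl(H_t^\top \Lambda_t\,dt + dM_t^\top\bigr)Z_t - H_t^\top \Lambda_t Z_t\,dt \;=\; dM_t^\top Z_t,
\]
so $L$ is driven purely by $dM^\top$ against the $\FF$--adapted process $Z$, hence a local martingale. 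Next, by the immersion hypothesis \eqref{asm:immersion}, the $\FF$--martingale $\pi=(\pi^y)_{y\in S}$, which is bounded (each $\pi^y$ is a conditional expectation of a bounded random variable, since $Y_v$ has entries in $[0,1]$ by Lemma \ref{eq:YZY-are-bounded} and $U$ is bounded), is also an $\FF\vee\GG$--martingale.

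Then I would apply the product rule to $W=L\pi$:
\[
dW_t \;=\; L_{t-}\,d\pi_t + \pi_{t-}\,dL_t + d[L,\pi]_t.
\]
The first two terms are local martingales by the previous step. For the bracket, because $Z$ is continuous one has $\Delta L_t=\Delta M_t^\top Z_t$, so component-wise
\[
[L^y,\pi^y]_t \;=\; \sum_{s\le t}\Delta L^y_s\,\Delta\pi^y_s \;=\; \sum_{x\in S}\int_{0}^{t} Z_s^{xy}\,d[M^x,\pi^y]_s.
\]
Here $\pi^y$ is a real-valued $\FF$--local martingale, so by the orthogonality assumption \eqref{asm:ort-to-M} each $[M^x,\pi^y]$ is a local martingale, and localization gives that $[L,\pi]$ is an $\FF\vee\GG$--local martingale. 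Thus $W$ is an $\FF\vee\GG$--local martingale on $[u,v]$.

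Finally I would upgrade ``local'' to ``true'' via uniform boundedness. Since $Z_t$ is $\F_t$--measurable, one can write
\[
W_t \;=\; H_t^\top Z_t\,\E(Y_v U\mid\F_t) \;=\; H_t^\top\,\E(Z_t Y_v U\mid\F_t),\qquad t\in[u,v],
\]
and Lemma \ref{eq:YZY-are-bounded} gives that $Z_tY_v$ is a (sub)stochastic matrix for $t\le v$, whence $|Z_tY_v U|\le \|U\|_\infty$ and $|W_t|\le \|U\|_\infty$. A bounded local martingale is a martingale, so $W$, and therefore $V=\I_A W$, is an $\FF\vee\GG$--martingale on $[u,v]$. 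The main technical point is the bracket computation, which crucially relies on assumption \eqref{asm:ort-to-M}; the boundedness step is the reason for rewriting $W_t$ with $Z_t$ pulled inside the conditional expectation, since $Z_t$ alone need not be bounded while $Z_tY_v$ is.
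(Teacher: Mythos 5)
Your proposal is correct and follows essentially the same route as the paper's proof: the same reduction to $\widehat V_t = H_t^\top Z_t\,\E(Y_vU\mid\F_t)$, the same integration-by-parts showing $dL_t = dM_t^\top Z_t$ for $L_t = H_t^\top Z_t$, the same use of immersion plus the orthogonality hypothesis \eqref{asm:ort-to-M} to make the brackets $[M^x,\pi^y]$ (and hence $\int Z\,d[M^x,\pi^y]$) local martingales, and the same boundedness upgrade via $Z_tY_v$ having entries in $[0,1]$. The only cosmetic difference is that you spell out the product rule for $L\pi$ term by term, whereas the paper packages the identical bracket computation as ``$L^x$ and $U^x(\cdot,v)$ are orthogonal local martingales, so their product is a local martingale.''
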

\begin{proof}
It suffices to prove that the process $\widehat V$ given as
\[
	\widehat{V}_t  = H^\top_t Z_t  \E(Y_{v} U | \F_t),\quad t \in [0,T],
\]
 is an $\FF \vee \GG$ martingale  on $[0,v]$.
Furthermore, since all components of $H_t$ and $Z_t Y_{v} $ are non-negative and bounded by $1$ (for the latter see Lemma \ref{eq:YZY-are-bounded}), and since random variable $U$ is bounded, then it suffices to show that $\widehat{V}$ is an $\FF \vee \GG$ local martingale.

Towards this end we first verify that vector valued process $L = (L^x,\, x \in S)^\top$ defined by  $L_t := H^\top_t Z_t,\ \tT,$ is an $\FF \vee \GG\,$--$\,$local martingale with the following representation
\begin{align}
L_t = H^\top_0 + \int_0^t d M_u^\top \! \cdot \! Z_u, \quad \tT.
\end{align}
Indeed, since $\Lambda$ is an $\FF$-intensity, integration by parts yields that
\[
d L_t =	d ( H^\top_t Z_t)  = H^\top_{t-} d Z_t + d H^\top_t\! \cdot \!  Z_t
= -H^\top_{t-} \Lambda_t Z_t dt + d H^\top_t\! \cdot \!  Z_t
= d M_t^\top \! \cdot \! Z_t.
\]
Next, we observe that the  vector valued process $U(\cdot,v)= (U^x(\cdot,v),\, x \in S)\mn{^\top}$ defined by
\[
{
U^x(t,v) = \sum_{y \in S }\mathbb{E} \left( Y^{xy}_v U^y | \F_t \right), \quad t \in [0,T],}
\]
is an $\FF$-martingale. Since we assume that $\FF$ is right-continuous we can take right-continuous modification of $U(\cdot,v)$.

Thus, by assumptions \eqref{asm:immersion} and  \eqref{asm:ort-to-M} in Theorem \ref{thm:CMC-char}, its components are orthogonal to components of $M$.
Hence the square bracket processes  $[M^y, U^x(\cdot, v)]$,  $x,y \in S$, are $\FF \vee \GG$-local martingales.
By properties of square brackets (cf. Protter \cite[Thm. II.6.29]{prot2004}) we obtain
\[
	[L^x , U^x(\cdot, v)]_t = \sum_{y \in S} \int_0^t Z^{y,x}_u d [M^y, U^x(\cdot, v)]_u.
\]
Thus, by predictability and  local boundedness of $Z$,  and by \cite[Thm. IV.2.29]{prot2004}, we conclude that process $[L^x , U^x(\cdot, v)]$ is a local martingale,  and consequently that local martingales $L^x$ and $ U^x(\cdot, v)$ are orthogonal. Since,
\[
\widehat{V}_t
=L_t U(t,v)=
\sum_{x \in  S } L^x_t U^x(t,v), \quad t \in [0,T],
\]
we conclude that $\widehat{V}$ is an $\FF \vee \GG\,$--$\,$local martingale  as  a sum of local martingales. \finproof
\end{proof}

\subsection*{Acknowledgments}
Research of T.R. Bielecki was partially supported by NSF grants DMS-0908099 and DMS-1211256.

\end{document}